\newtheorem{obs}{Observation}[section]
\newtheorem{lem}[obs]{Lemma}
\newtheorem{theo}[obs]{Theorem}
\newtheorem{pro}[obs]{Proposition}
\newcounter{countcase}
\newcounter{countclaim}
\begin{document}
	
\baselineskip 0.7 cm

\title
{\bf\LARGE Spectral Sufficient Conditions for Graph Factors\footnote{This work is supported by the National Science Foundation of China (Nos.12261074 and 12461065).}\\[2mm]} 

\author{\small  Fengyun Ren,$^{a}$\ \ Shumin Zhang,$^{a,b,}$\footnote{Corresponding author.}\ \ Ke Wang, $^{a}$\\
                \small $^{a}$School of Mathematics and Statistics, \\
        \small Qinghai Normal University, Xining, 810001, China\\
		\small $^{b}$Academy of Plateau Science and Sustainability, People's  \\
		\small Government of Qinghai Province and Beijing Normal University\\[0.2cm]
		\small E-mails:  renfengyun1@aliyun.com,	zhangshumin@qhnu.edu.cn, wk950413@163.com.}
\date{}
\maketitle{}
\abstract{The $\{K_{1,1}, K_{1,2},C_m: m\geq3\}$-factor of a graph is a spanning subgraph whose each component
is an element of $\{K_{1,1}, K_{1,2},C_m: m\geq3\}$. In this paper, through the graph  spectral methods, we establish the lower bound of the signless Laplacian spectral radius and the upper bound of the distance spectral radius to determine whether a graph admits a $\{K_2\}$-factor.  We get a lower bound on the size (resp. the spectral radius) of
$G$ to guarantee that $G$ contains a $\{K_{1,1}, K_{1,2},C_m: m\geq3\}$-factor. Then we determine an upper
bound on the distance spectral radius of $G$ to ensure that $G$ has a $\{K_{1,1}, K_{1,2},C_m: m\geq3\}$-factor.
Furthermore, by constructing extremal graphs, we show that the above all bounds are best possible.\\[2mm]
{\bf Keywords:}  Signless Laplacian spectral radius; Distance spectral radius; A $\{K_{1,1}, K_{1,2},C_m: m\geq3\}$-factor; A $\{K_2\}$-factor; Size.\\[2mm]
{\bf (2020) Mathematics Subject Classification:}  05C42; 05C70.

\section{Introduction}

\begin{enumerate}
\item

 Many physical structures can be conveniently modeled by networks. Examples include a communication network
 with the nodes and links modeling cities and communication channels, respectively; and a railroad network with nodes
 and links representing railroad stations and railways between two stations, respectively. Factors and factorizations in
 networks are very useful in combinatorial design, network design, circuit layout and so on. In particular, a wide variety
 of systems can be described using complex networks. Such systems include: the cell, where we model the chemicals
 by nodes and their interactions by edges; the World Wide Web, which is a virtual network of Web pages connected
 by hyperlinks; and food chain webs, the networks by which human diseases spread, human collaboration networks
 etc \cite{N2003}. It is well known that a network can be represented by a graph. Vertices and edges of the graph correspond to
 nodes and links between the nodes, respectively. Henceforth we use the term “graph” instead of “network”.

 \quad\quad We study the factor problem in graphs, which can be considered as a relaxation of the well-known
 cardinality matching problem. The factor problem has wide-range applications in areas such as network
 design, scheduling and combinatorial polyhedra. For instance, in a communication network if we allow several large
 data packets to be sent to various destinations through several channels, the efficiency of the network will be improved
 if we allow the large data packets to be partitioned into small parcels. The feasible assignment of data packets can be
 seen as a factor flow problem and it becomes a fractional matching problem when the destinations and sources of
 a network are disjoint (i.e., the underlying graph is bipartite).

 \quad\quad  In this paper, we
only consider finite and undirected simple graphs. For graph theoretic notation and terminology not defined here, we refer to \cite{AK11,{B2014},{GR2001},YL09}. 
 A factor of a graph is a spanning subgraph. For integers a
 and b with $0 \leq a \leq b$, an $[a,b]$-factor is defined as a factor $F$ such that $a\leq \operatorname{deg}_F(x) \leq b$ for every vertex $x$, where
 $\operatorname{deg}_F(x)$ is the degree of $x$ in $F$. A $1$-factor is a $[1,1]$-factor. As usual, we denote by $C_\nu$, $K_\nu$, $K_{1,\nu-1}$ a cycle, a complete graph and
a star of order $\nu$, respectively.
 For positive integer $m\geq3$, A $\{K_{1,1}, K_{1,2}, C_3, C_4 \ldots C_m\}$-factor for which each component is (isomorphic to) one of $K_{1,1}$, $K_{1,2}$, $C_3$, $C_4$ $\ldots C_m$. A $\{K_{2}\}$-factor is a 1-factor.
 
 \quad\quad The existence of factors with given properties has received special attention; see, e.g. \cite{SS2021}. 
 In recent years, it is of great interest for researchers to find spectral sufficient conditions
such that a graph has a given factor; see the survey \cite{FLL2003}.

\quad\quad For two vertex disjoint graphs $G$ and $H$, and $G \cup H$ denotes the \emph{disjoint union} of $G$ and $H$, $G \vee H$
denotes the \emph{join} of $G$ and $H$, which is obtained from $G \cup H$ by adding all possible edges between any vertex
of $G$ and any vertex of $H$. For positive integer $k$ and a graph $G$, $kG$ denotes the graphs consisting of $k$ vertex
disjoint copies of $G$.

\quad\quad Given a graph, we denote by $\rho(G)$ the spectral radius of $G$, $\kappa(G)$ the signless Laplacian spectral radius
of $G$ and $\mu_1(G)$ the  distance spectral radius of $G$. 
Let graph $K(a; h_1\times q_1,h_2\times q_2, q_3,q_4,\ldots, q_h)=K_{a} \vee (h_1K_{q_1}\cup h_2K_{q_2}\cup K_{q_3}\cup K_{q_4}\ldots\cup K_{q_h})$. 
We are concerned about two types of factors with given properties.  

\quad\quad One is a $\{K_2\}$-factor, for which Amahashi \cite{A1985} gave a characterization. As far as we know, although this concept has been extended to various factors that contain a given edge, there is no spectral
sufficient condition for a graph has a $\{K_2\}$-factor.
Feng et al. \cite{FZLLLH2017}, Suil \cite{S2021} and Kimet al. \cite{KSS2022} gave spectral radius conditions that imply a graph on $\nu$ vertices has a matching of size (at least) $\frac{\nu-k}{2}$ for $0 \leq k \leq \nu$.
Instead of directly considering $\{K_2\}$-factor, we establish spectral radius
conditions that imply a graph on $\nu$ vertices has a matching of size (at least) $\frac{\nu-k}{2}$
for $0 \leq k \leq \nu$ containing any
given edge. We show the following results.

\quad\quad Our first main result gives a sufficient condition to ensure that a graph $G$
contains a $\{K_2\}$-factor according to the signless Laplacian spectral radius of $G$.
\begin{theo}\label{theorem3}
Let $G$ be a connected graph of order $\nu \geq max\{\delta(G) - 1 + (\delta(G))^2(\delta(G) + 1), \frac{19}{3}\delta(G)  + 3\}$, where $\nu$ is an even number and $\delta(G) \geq 19$.
If $\kappa(G)\geq \kappa(K(\delta(G); (\delta(G)+1)\times 1,\nu-2\delta(G)-1))$, then $G$ contains a $\{K_2\}$-factor, with equality if and only if  $G \cong K(\delta(G); (\delta(G)+1)\times 1,\nu-2\delta(G)-1)$.
\end{theo}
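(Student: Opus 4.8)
\emph{Proof proposal.} Write $\delta=\delta(G)$ and recall that a $\{K_2\}$-factor is precisely a perfect matching, so by Tutte's $1$-factor theorem $G$ fails to have one exactly when there is a set $S\subseteq V(G)$ with $o(G-S)\geq |S|+1$; since $\nu$ is even, parity forces $o(G-S)\geq |S|+2$. I will argue by contradiction: assume $G$ has no $\{K_2\}$-factor while $\kappa(G)\geq \kappa(K(\delta;(\delta+1)\times 1,\nu-2\delta-1))$, and show this forces $G\cong K(\delta;(\delta+1)\times 1,\nu-2\delta-1)$ --- which is impossible, since deleting the $K_\delta$ from that graph leaves $\delta+2$ odd components (the $\delta+1$ singletons together with $K_{\nu-2\delta-1}$, of odd order because $\nu$ is even). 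Fix a Tutte set $S$ with $o(G-S)\geq |S|+2$, put $s=|S|$, and let $n_1\geq\cdots\geq n_t$ be the orders of the components of $G-S$, so $t\geq s+2$ and $\sum_i n_i=\nu-s$. Since $\delta(G)\geq\delta$, every component $C$ satisfies $|C|+s-1\geq\delta$, i.e.\ $n_i\geq\delta-s+1$; and connectivity of $G$ rules out $s=0$.

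The first reduction is structural: $G$ is a spanning connected subgraph of $G':=K_s\vee(K_{n_1}\cup\cdots\cup K_{n_t})$, and since $\kappa$ is strictly increasing under edge addition in a connected graph, $\kappa(G)\leq\kappa(G')$ with equality iff $G=G'$ (one may further assume all components of $G-S$ are odd, absorbing any even component into the largest clique). Next, the standard compression lemma for the signless Laplacian spectral radius of a join of a clique with disjoint cliques shows that $\kappa(G')$ is at most the value attained when $t=s+2$ and all parts but the largest have the minimum admissible order; this reduces everything to two one-parameter families: for $s\geq\delta$ the graph $H_s:=K_s\vee((s+1)K_1\cup K_{\nu-2s-1})=K(s;(s+1)\times 1,\nu-2s-1)$; and for $1\leq s\leq\delta-1$ the graph $K_s\vee\big((s+1)\,K_{p}\cup K_{\nu-s-(s+1)p}\big)$, where $p$ is the smallest odd integer $\geq\delta-s+1$. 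In each family $\kappa$ is the largest root of the characteristic polynomial of a $3\times 3$ equitable quotient matrix; for $H_s$ that matrix is
\[
B_s=\begin{pmatrix}
\nu+s-2 & s+1 & \nu-2s-1\\
s & s & 0\\
s & 0 & 2\nu-3s-4
\end{pmatrix}.
\]

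The plan is then to establish two comparison facts. First, $g(s):=\kappa(H_s)$ is strictly decreasing for $\delta\leq s\leq\nu/2-1$; this is done by comparing $\det(xI-B_{s+1})$ and $\det(xI-B_s)$ at $x=g(s)$ and checking that the sign is pinned down by the hypotheses $\delta\geq 19$ and $\nu\geq\tfrac{19}{3}\delta+3$. Consequently, when $s\geq\delta$ we get $\kappa(G)\leq\kappa(G')\leq g(s)\leq g(\delta)=\kappa(K(\delta;(\delta+1)\times 1,\nu-2\delta-1))$, and equality throughout forces $s=\delta$, $G'=H_\delta$, hence $G=G'$. Second, for $1\leq s\leq\delta-1$ one bounds the largest root of the corresponding quotient polynomial --- using that forcing $s+2$ odd components of order $\geq\delta-s+1$ shrinks the dominant clique by at least $(s+1)(\delta-s+1)$, which the hypothesis $\nu\geq\delta-1+\delta^2(\delta+1)$ renders negligible --- and shows it is strictly below $g(\delta)$, contradicting $\kappa(G)\geq g(\delta)$. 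Hence $s\geq\delta$ is the only possibility, and the equality analysis yields $G\cong K(\delta;(\delta+1)\times 1,\nu-2\delta-1)$.

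I expect the genuinely hard part to be the sign computation establishing the monotonicity of $g$ in the first family, and the parallel quantitative gap estimate in the second: both amount to verifying that an explicit cubic, with coefficients polynomial in $\nu,s,\delta$, has the right sign on the relevant interval, and it is precisely there that the somewhat baroque lower bounds on $\nu$ and the condition $\delta\geq 19$ get consumed. Everything else --- the Tutte reduction, edge-addition monotonicity, the compression lemma, and the interlacing that makes the quotient matrix deliver $\kappa$ exactly --- is routine.
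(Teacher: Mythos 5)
Your proposal is correct and follows essentially the same route as the paper: the Tutte/parity reduction to $o(G-X)\geq|X|+2$, embedding $G$ in a join of cliques, a compression lemma to minimize all but one component, and then a three-way case split on $|X|$ versus $\delta(G)$ resolved by sign analysis of the $3\times3$ equitable quotient polynomials (your matrix $B_s$ is the paper's $M_1$ up to reordering, and your monotonicity-in-$s$ claim is exactly the paper's factorization $\Phi_{M_1}-\Phi_{M_2}=(|X|-\delta(G))g_1(x)$). The only cosmetic differences are that you phrase the $|X|\geq\delta$ comparison as a step-by-step decrease rather than a direct comparison with the $|X|=\delta$ graph, and you track oddness of the small components where the paper does not need to.
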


\quad\quad Our second main result gives a sufficient condition to ensure that a graph $G$
contains a $\{K_2\}$-factor according to the distance spectral radius of $G$.
\begin{theo}\label{theorem7}
Let $G$ be a connected graph of order $\nu \geq max\{\frac{2}{3}(\delta(G))^3, 12\delta(G)+5\}$, where $\nu$ is an even number and $\delta(G) \geq 12$. 
If $\mu_1(G)\leq \mu_1(K(\delta(G); ((\delta(G)+1)\times 1,\nu-2\delta(G)-1))$, then $G$ contains a $\{K_2\}$-factor, with equality if and only if $G \cong K(\delta(G); ((\delta(G)
+1)\times 1,\nu-2\delta(G)-1)$.
\end{theo}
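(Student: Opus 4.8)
The plan is to establish the contrapositive: if a connected graph $G$ on an even number $\nu$ of vertices, with $\delta := \delta(G)\ge 12$ and $\nu$ as large as the hypotheses demand, has no $\{K_2\}$-factor, then $\mu_1(G)\ge\mu_1\big(K(\delta;(\delta+1)\times 1,\nu-2\delta-1)\big)$, with equality only when $G\cong K(\delta;(\delta+1)\times 1,\nu-2\delta-1)$; since the latter graph has no perfect matching this is exactly what is needed. The two tools are Tutte's $1$-factor theorem — because $G$ has no perfect matching and $\nu$ is even, there is $S\subseteq V(G)$ with $o(G-S)\ge |S|+2$; put $s=|S|\ge 1$ — and the entrywise monotonicity of the distance matrix: if a connected graph $H$ is obtained from a connected graph $G'$ by adding edges, then $D(H)\le D(G')$ entrywise and hence, by the Perron--Frobenius theorem, $\mu_1(H)\le\mu_1(G')$, the inequality being strict as soon as some distance strictly decreases.

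First I would pass to a canonical supergraph. Making $G[S]$ complete, joining $S$ completely to $V(G)\setminus S$, completing each component of $G-S$, and then, if $G-S$ has more than $s+2$ components, merging the surplus ones into the largest by adding edges, produces a graph
\[
H \;=\; K_s \vee \big(K_{m_1}\cup K_{m_2}\cup\cdots\cup K_{m_{s+2}}\big),\qquad m_1\ge m_2\ge\cdots\ge m_{s+2}\ge 1,\quad \textstyle\sum_{i} m_i=\nu-s,
\]
of which $G$ is a spanning subgraph; thus $\mu_1(G)\ge\mu_1(H)$, with equality only if no edges were added, i.e.\ $G=H$ (in which case $G-S$ already has exactly $s+2$ components, all of them odd). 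Since $H\supseteq G$, each vertex of $K_{m_i}$ has degree $s+m_i-1\ge\delta$ in $H$, so $m_{s+2}\ge\max\{1,\delta-s+1\}=:t_s$; in particular, if some component of $G-S$ is trivial then $s\ge\delta$, while if $s\le\delta-1$ then every component of $G-S$ has at least $t_s\ge 2$ vertices, which already pins many pairs of vertices at mutual distance $2$.

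It remains to show $\mu_1(H)\ge\mu_1\big(K(\delta;(\delta+1)\times 1,\nu-2\delta-1)\big)$, and here the argument has two levels. First, a \emph{concentration lemma}: for each fixed $s$, over all size vectors $(m_1,\dots,m_{s+2})$ with $m_i\ge t_s$ and $\sum m_i=\nu-s$, the quantity $\mu_1\big(K_s\vee(K_{m_1}\cup\cdots\cup K_{m_{s+2}})\big)$ is minimized uniquely when $m_2=\cdots=m_{s+2}=t_s$ and $m_1=\nu-s-(s+1)t_s$. I would prove this by a shifting step — move one vertex (or a pair of vertices, to stay inside a parity class) from a smaller clique $K_{m_j}$ to a larger clique $K_{m_i}$ — expressing $\mu_1$ through the equitable partition $\{S,C_1,\dots,C_{s+2}\}$, on whose blocks the Perron eigenvector of the distance matrix is constant (these blocks are the orbits of the automorphism group), and comparing the two quotient matrices, using that a vertex's Perron weight is a decreasing function of the size of its clique, since vertices in small cliques have larger distance-sums. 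Secondly, a \emph{comparison in $s$}: with the sizes concentrated, $H$ becomes $K_s\vee\big(K_{\nu-s-(s+1)t_s}\cup(s+1)K_{t_s}\big)$, whose distance spectral radius is the Perron root of an explicit $3\times3$ quotient matrix on the blocks $\big(S,\ \text{big clique},\ \text{the }(s+1)\ \text{small cliques}\big)$; one checks that this root, as a function of $s$ over the feasible range $1\le s\le\lfloor(\nu-2)/2\rfloor$ (with $t_s=1$ once $s\ge\delta$), attains its minimum exactly at $s=\delta$, $t_\delta=1$, and is strictly larger for every other $s$. Combining the two levels with $\mu_1(G)\ge\mu_1(H)$ gives the inequality, and retracing the equality conditions — no edges added in the reduction, $s=\delta$, and $m_2=\cdots=m_{s+2}=1$ — forces $G\cong K(\delta;(\delta+1)\times 1,\nu-2\delta-1)$.

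The crux is the pair of monotonicity statements in the previous paragraph. The relevant quotient matrices have entries quadratic in $\nu$ and depending on $s$ and the part sizes, and in the un-concentrated case the quotient grows with $s$, so one must either reduce to the $3\times 3$ case via the concentration lemma before comparing, or bound the full Perron root directly; getting clean \emph{strict} inequalities that survive uniformly over all admissible $s$ is exactly where the explicit lower bounds $\nu\ge\max\{\tfrac{2}{3}\delta^{3},\,12\delta+5\}$ and $\delta\ge12$ are spent — partly to keep the concentrated configurations feasible, partly to make the Perron-root comparison strict near $s=\delta$, where the gap is smallest. A secondary nuisance is the bookkeeping of the odd-component structure, needed so that $K(\delta;(\delta+1)\times 1,\nu-2\delta-1)$ is correctly singled out as the unique equality case.
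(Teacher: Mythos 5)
Your plan follows essentially the same route as the paper's proof: reduce via the Tutte-type characterization to a spanning supergraph $K_{|X|}\vee(K_{\nu_1}\cup\cdots\cup K_{\nu_{|X|+2}})$, prove a concentration lemma (the paper's Fact~2, done there in one Rayleigh-quotient step rather than by iterated shifting) showing the distance spectral radius is minimized when all but one clique have the forced minimum size $\max\{1,\delta-|X|+1\}$, and then compare the resulting $3\times3$ equitable quotient matrices over the cases $|X|<\delta$, $|X|=\delta$, $|X|>\delta$, with the hypotheses on $\nu$ and $\delta$ spent on making those polynomial comparisons strict. The approach and the equality analysis match the paper's.
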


\quad\quad The other is a $\{K_{1,1}, K_{1,2}, C_3, C_4 \ldots C_m\}$-factor, for which Akiyama J. and Era H. \cite{JH1980} gave a characterization. Also, we establish spectral conditions that imply a graph has a $\{K_{1,1}, K_{1,2}, C_3, C_4 \ldots C_m\}$-factor.

\quad\quad Our third main result gives a sufficient condition to ensure that a graph $G$
contains a $\{K_{1,1}, K_{1,2},C_m: m\geq3\}$-factor according to the size of $G$.
\begin{theo}\label{theorem11}
Let  $\nu\geq 4$ be a integer, and let $G$ be a connected graph with $\nu$ vertices.
If $|E(G)| \geq 11$ with $\nu = 7$, then $G$ contains a $\{K_{1,1}, K_{1,2},C_m: m\geq3\}$-factor, with equality if and only if $G \cong K(2; 5\times1)$.
If $|E(G)| \geq\binom{\nu-3}{2}+3$ with $\nu \neq 7$, then $G$ contains a
$\{K_{1,1}, K_{1,2},C_m: m\geq3\}$-factor, with equality if and only if $G \cong K(1 ; \nu-4, 3\times1)$.
\end{theo}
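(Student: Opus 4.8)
The plan is to proceed by contradiction, via the Akiyama--Era characterization: a graph $G$ has a $\{K_{1,1},K_{1,2},C_m:m\ge 3\}$-factor if and only if $i(G-S)\le 2|S|$ for every $S\subseteq V(G)$, where $i(H)$ denotes the number of isolated vertices of $H$. So suppose $G$ is connected on $\nu\ge 4$ vertices, has at least the claimed number of edges, yet admits no such factor; then there is $S\subseteq V(G)$ with $i(G-S)\ge 2|S|+1$. Put $s=|S|$. Since $G$ is connected and $\nu\ge 4$ we have $s\ge 1$, and since the isolated vertices of $G-S$ all lie outside $S$ we get $2s+1\le i(G-S)\le\nu-s$, hence $1\le s\le\lfloor(\nu-1)/3\rfloor$.

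The next step is to bound $|E(G)|$ from above in terms of $s$. Partition $E(G)$ into edges inside $S$ (at most $\binom s2$), edges joining $S$ to $V(G)\setminus S$ (at most $s(\nu-s)$), and edges inside $V(G)\setminus S$. Isolated vertices of $G-S$ contribute nothing to the last group, while the non-isolated components of $G-S$, spanning say $c$ vertices in total with $c\le\nu-s-(2s+1)=\nu-3s-1$, contribute at most $\binom c2$; since $\binom c2$ increases with $c$ we obtain
\[
|E(G)|\ \le\ \binom s2+s(\nu-s)+\binom{\nu-3s-1}{2}\ =:\ f(s).
\]
Moreover equality throughout forces $S$ to be a clique joined completely to $V(G)\setminus S$, forces all non-isolated components of $G-S$ to coalesce into one clique, and forces $i(G-S)=2s+1$; that is, $G\cong K_s\vee\bigl(K_{\nu-3s-1}\cup(2s+1)K_1\bigr)=K(s;(2s+1)\times 1,\nu-3s-1)$.

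It then remains to maximize $f$ over $1\le s\le\lfloor(\nu-1)/3\rfloor$. A short computation gives $2f(s)=8s^2-4(\nu-2)s+(\nu-1)(\nu-2)$, a convex quadratic in $s$ with minimum at $s=(\nu-2)/4$, so its maximum over the feasible integer interval is attained at an endpoint. One checks $f(1)=\binom{\nu-4}{2}+\nu-1=\binom{\nu-3}{2}+3$, with extremal graph $K(1;3\times 1,\nu-4)=K(1;\nu-4,3\times 1)$. Comparing $f(1)$ with the value of $f$ at the upper endpoint $s=\lfloor(\nu-1)/3\rfloor$, case-split on $\nu\bmod 3$ (and checking the few smallest $\nu$ by hand), one finds that for $\nu\ne 7$ the maximum equals $\binom{\nu-3}{2}+3$, forcing $G\cong K(1;\nu-4,3\times 1)$ in the extremal case, whereas for $\nu=7$ the upper endpoint is $s=2$ and the maximum is $f(2)=11>9=f(1)$, forcing $G\cong K(2;5\times 1)$. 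In all cases $|E(G)|\le f(s)$ is at most the stated threshold, so the hypothesis on $|E(G)|$ makes every inequality an equality and pins $G$ down to the claimed extremal graph; finally one verifies directly — again via the Akiyama--Era criterion, deleting the single universal vertex, resp.\ the two vertices of the $K_2$ — that this extremal graph indeed has no $\{K_{1,1},K_{1,2},C_m:m\ge 3\}$-factor, which is precisely the asserted equality statement.

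The hard part will be the endpoint comparison together with the equality characterization for small $\nu$: because $f$ is a convex quadratic it is largest at the ends of the feasible range of $s$, and for small $\nu$ the two endpoints are numerically close and the associated extremal graphs genuinely change shape, so the generic monotonicity argument must be supplemented by a careful direct check for each small value of $\nu$ — this is exactly where the special role of $\nu=7$ emerges. By comparison, the reduction through Akiyama--Era, the edge-counting estimate, and the concluding deduction that $G$ must be the extremal graph are all routine once the quadratic bound is in hand.
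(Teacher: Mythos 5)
Your overall strategy is the same as the paper's: reduce via the Akiyama--Era criterion to a set $S$ with $\operatorname{iso}(G-S)\ge 2|S|+1$, bound $|E(G)|$ by the edge count of $K(s;(2s+1)\times 1,\nu-3s-1)$, and optimize over $s$. Your packaging is cleaner than the paper's (which instead fixes an edge-maximal counterexample, argues structurally that $G$ must be such a join, and then compares cases), and your computation $2f(s)=8s^2-4(\nu-2)s+(\nu-1)(\nu-2)$, the endpoint reduction, $f(1)=\binom{\nu-3}{2}+3$, and the identification of $\nu=7$, $s=2$ as the exceptional winner are all correct.

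However, the step you defer as ``the hard part'' genuinely fails to deliver the stated uniqueness. From $2\bigl(f(s)-f(1)\bigr)=4(s-1)(2s+4-\nu)$ one sees that a tie $f(s)=f(1)$ with $s\ge 2$ occurs exactly when $\nu=2s+4$, which is compatible with $s\le\lfloor(\nu-1)/3\rfloor$ at $\nu=8$ ($s=2$) and $\nu=10$ ($s=3$). Concretely, $K(2;6\times 1)$ is a connected $8$-vertex graph with $13=\binom{5}{2}+3$ edges and no $\{K_{1,1},K_{1,2},C_m\}$-factor (deleting the two dominating vertices isolates six vertices), and $K(3;7\times 1)$ is a connected $10$-vertex graph with $24=\binom{7}{2}+3$ edges and no such factor; neither is isomorphic to $K(1;\nu-4,3\times 1)$. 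So ``forcing $G\cong K(1;\nu-4,3\times1)$'' is false for $\nu\in\{8,10\}$, and only the inequality half of your conclusion survives there. In fairness, the paper's own proof hits exactly the same ties --- Subcase 2.1 with $|X|=3$ and Subcase 2.2 with $|X|=2$, where it computes $h(\nu,1)-|E(G)|=2(|X|-1)(|X|-3)=0$ and $2(|X|-1)(|X|-2)=0$ respectively --- and either mislabels them a ``contradiction'' or passes over them, so the theorem's ``if and only if'' clause is itself defective at these two orders. A complete argument must either treat $\nu=8$ and $\nu=10$ separately and list $K(2;6\times1)$ and $K(3;7\times1)$ as additional extremal graphs, or restrict the uniqueness claim; your proposed hand-check of small $\nu$ would have surfaced this, but as written the claim you assert is not provable.
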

\quad\quad Our fourth main result gives a sufficient condition to ensure that a graph $G$
contains a $\{K_{1,1}, K_{1,2},C_m: m\geq3\}$-factor according to the spectral radius of $G$.
\begin{theo}\label{theorem12}
Let $\nu \geq 4$ be a integer, and let $G$ be a connected graph with $\nu$ vertices. Assume the largest root of
$x^3 - (\nu -5)x^2 -(\nu - 1)x+3\nu-15 = 0$ is $\beta(\nu)$.
If $\rho(G) \geq \frac{1+\sqrt41}{2}$ with $\nu = 7$, then $G$ contains a
$\{K_{1,1}, K_{1,2},C_m: m\geq3\}$-factor, with equality if and only if $G \cong K(2; 5\times1)$.
If $\rho(G) \geq \beta(\nu)$ with $\nu \neq 7$, then $G$ contains a
$\{K_{1,1}, K_{1,2},C_m: m\geq3\}$-factor, with equality if and only if $G \cong K(1 ; \nu-4, 3\times1)$.
\end{theo}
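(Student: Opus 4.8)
The plan is to derive the spectral bound from the size bound of Theorem~\ref{theorem11} together with a standard inequality relating $\rho(G)$ to $|E(G)|$, and then to analyze the tiny handful of graphs that survive the reduction. First I would argue by contradiction: suppose $G$ is a connected graph on $\nu$ vertices with $\rho(G)\ge\beta(\nu)$ (or $\rho(G)\ge\frac{1+\sqrt{41}}{2}$ when $\nu=7$) that has no $\{K_{1,1},K_{1,2},C_m:m\ge3\}$-factor. The key analytic input is the Hong-type bound $\rho(G)^2\le 2|E(G)|-\nu+1$ valid for connected graphs, or more precisely a sharpened version that incorporates the minimum degree; applying it to $G$ shows that the spectral hypothesis forces $|E(G)|$ to be large. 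The arithmetic to check is that $\beta(\nu)$ is exactly calibrated so that $\rho(G)\ge\beta(\nu)$ implies $|E(G)|\ge\binom{\nu-3}{2}+3$ for all but finitely many small $\nu$, and similarly $\frac{1+\sqrt{41}}{2}$ is the spectral radius of $K(2;5\times1)$ (indeed $\frac{1+\sqrt{41}}{2}$ should be the largest root of the relevant quotient polynomial for that graph on $7$ vertices). Once $|E(G)|$ is pushed above the threshold, Theorem~\ref{theorem11} yields the desired factor unless $G$ is the extremal graph $K(1;\nu-4,3\times1)$ (resp. $K(2;5\times1)$), which gives the equality case.

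The reduction above only works cleanly in the range where the Hong-type estimate is tight enough; for small $\nu$ (a finite list, likely $\nu\in\{4,5,6,7,8\}$ or so) the inequality $\rho(G)^2\le 2|E(G)|-\nu+1$ is too lossy, so these cases must be handled directly. For those I would enumerate the connected graphs on $\nu$ vertices failing to have the factor — by the Akiyama–Era characterization~\cite{JH1980} these are graphs with a "bad" vertex set, so they are few and sparse — and check that each has spectral radius strictly below $\beta(\nu)$ (or below $\frac{1+\sqrt{41}}{2}$ for $\nu=7$). Here one uses that the extremal graph is $K_1\vee(K_{\nu-4}\cup 3K_1)$, whose spectral radius is computable by equitable partition: the quotient matrix on the parts $\{$the dominating vertex$\}$, $V(K_{\nu-4})$, $\{$the three isolated-in-$H$ vertices$\}$ has characteristic polynomial exactly $x^3-(\nu-5)x^2-(\nu-1)x+3\nu-15$, which defines $\beta(\nu)$; so I would first verify this equitable-partition computation, then argue that any other factor-free graph on $\nu$ vertices is a proper subgraph of (or otherwise spectrally dominated by) this one.

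The main obstacle I anticipate is the boundary bookkeeping: making the passage from the spectral hypothesis to the edge-count hypothesis genuinely tight, i.e. showing that $\rho(G)\ge\beta(\nu)$ together with $G$ not being the extremal graph forces $|E(G)|\ge\binom{\nu-3}{2}+3$ \emph{strictly} enough to invoke Theorem~\ref{theorem11} with its equality clause intact. One must rule out the annoying possibility of a near-extremal graph with slightly fewer edges but still large $\rho$; this typically requires a secondary argument — for instance, if $\rho(G)\ge\beta(\nu)$ but $|E(G)|=\binom{\nu-3}{2}+2$, one shows that among all such graphs the spectral radius is maximized by a specific graph whose $\rho$ is computed (again via an equitable partition) and shown to be $<\beta(\nu)$, a contradiction. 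The case $\nu=7$ is singled out precisely because $K(2;5\times1)$ beats $K(1;3,3\times1)$ there, so I would treat $\nu=7$ with its own short computation comparing $\rho(K(2;5\times1))=\frac{1+\sqrt{41}}{2}$ against the alternative. Verifying the monotonicity of $\beta(\nu)$ and that $\beta(\nu)$ lies in the expected interval (so that the cubic has its largest root where claimed) is routine but needs to be stated.
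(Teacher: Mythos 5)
Your reduction to Theorem~\ref{theorem11} via a Hong-type bound has a quantitative gap that, once examined, swallows the whole proof. The cubic $x^3-(\nu-5)x^2-(\nu-1)x+3\nu-15$ evaluates to $-3$ at $x=\nu-4$, so $\beta(\nu)$ is only marginally larger than $\nu-4$. Hong's inequality $\rho(G)^2\le 2|E(G)|-\nu+1$ then gives $2|E(G)|>(\nu-4)^2+\nu-1=\nu^2-7\nu+15$, and since $\nu^2-7\nu$ is always even this yields only $|E(G)|\ge\binom{\nu-3}{2}+2$ — exactly one edge short of the threshold $\binom{\nu-3}{2}+3$ in Theorem~\ref{theorem11}. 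So the hypothesis $\rho(G)\ge\beta(\nu)$ never forces the edge count you need, and the ``annoying near-extremal possibility'' you flag is not a boundary nuisance but the entire residual problem: you must show that every connected factor-free graph with $|E(G)|=\binom{\nu-3}{2}+2$ has $\rho<\beta(\nu)$. Your suggested fix — that any other factor-free graph is a proper subgraph of, or ``otherwise spectrally dominated by,'' $K(1;\nu-4,3\times1)$ — is false as a subgraph statement (e.g.\ $K(|X|;(2|X|+1)\times1)$ for $|X|\ge 2$ is factor-free and not a subgraph of the extremal graph; $K(2;5\times1)$ at $\nu=7$ is precisely the case where it wins), and the ``spectrally dominated'' clause is exactly the content of the theorem, not something one can assume.

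For contrast, the paper never passes through the edge count. It takes a counterexample $G$ of maximum spectral radius, uses the Akiyama--Era criterion to produce $X$ with $\operatorname{iso}(G-X)\ge 2|X|+1$, and observes that maximality forces $G=G[X]\vee(G-X)$ with all parts complete and at most one nontrivial component in $G-X$; this pins $G$ down to the explicit families $K(|X|;\nu_1,(2|X|+1)\times1)$, $K(|X|;(2|X|+1)\times1)$ and $K(|X|;(2|X|+2)\times1)$. It then computes $\rho$ of each via equitable quotient matrices and shows by elementary polynomial estimates that $\alpha(\rho(G))\le 0$ (hence $\rho(G)\le\beta(\nu)$) with equality only at $|X|=1$, treating $\nu=7$ separately because $\rho(K(2;5\times1))=\frac{1+\sqrt{41}}{2}>\beta(7)$. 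If you want to salvage your route, you would have to carry out essentially this same structural classification for the residual edge-count case, at which point the detour through Theorem~\ref{theorem11} buys nothing. Your equitable-partition identification of $\beta(\nu)$ and of $\frac{1+\sqrt{41}}{2}$ as the spectral radii of the two extremal graphs is correct and matches the paper's Theorem~\ref{theorem15}.
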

\quad\quad Our last main result gives a sufficient condition to ensure that a graph $G$
contains a $\{K_{1,1}, K_{1,2},C_m: m\geq3\}$-factor according to the distance spectral radius of $G$.
\begin{theo}\label{theorem13}
Let  $\nu \geq4$ be a integer, and let $G$ be a connected graph with $\nu$ vertices.
If $\mu_1(G) \leq \mu_1(K(2; 5\times1))$ with $\nu = 7$, then $G$ contains a
$\{K_{1,1}, K_{1,2},C_m: m\geq3\}$-factor, with equality if and only if $G \cong K(2; 5\times1)$.
If $\mu_1(G) \leq \mu_1(K(1 ; \nu-4, 3\times1))$ with $\nu \neq 7$, then $G$ contains a
$\{K_{1,1}, K_{1,2},C_m: m\geq3\}$-factor, with equality if and only if $G \cong K(1 ; \nu-4, 3\times1)$.
\end{theo}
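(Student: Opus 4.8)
The plan is to prove Theorem~\ref{theorem13} by combining the Akiyama--Era characterization of graphs admitting a $\{K_{1,1},K_{1,2},C_m:m\geq3\}$-factor with a distance-spectral comparison argument, exactly parallel to the size and adjacency-spectral versions in Theorems~\ref{theorem11} and~\ref{theorem12}. First I would recall the characterization: a graph $G$ has a $\{K_{1,1},K_{1,2},C_m:m\geq3\}$-factor if and only if $i(G-S)\le 2|S|$ for every $S\subseteq V(G)$, where $i(\cdot)$ denotes the number of isolated vertices. So assume for contradiction that $G$ has no such factor; then there is a nonempty set $S$ with $i(G-S)\ge 2|S|+1$. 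Writing $|S|=s$, the graph $G$ is a spanning subgraph of $K_s\vee(K_{\nu-s-(2s+1)}\cup (2s+1)K_1)$, i.e.\ of $K(s;\,\nu-3s-1,\,(2s+1)\times 1)$ (handling the degenerate case $\nu-3s-1\le 1$ separately, where the ``big'' clique is empty or a single vertex).

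Next I would invoke monotonicity of the distance spectral radius under edge addition: if $H$ is a spanning subgraph of $H'$ then $\mu_1(H)\ge\mu_1(H')$, with equality iff $H=H'$ (distances only shrink when edges are added, and the distance matrix is nonnegative irreducible so Perron--Frobenius gives strict monotonicity). Hence $\mu_1(G)\ge \mu_1(K(s;\nu-3s-1,(2s+1)\times1))$, with equality forcing $G$ to be exactly that graph. The crux is then a purely computational comparison: I must show that among the candidate ``barely infeasible'' graphs $K(s;\nu-3s-1,(2s+1)\times1)$ for $1\le s\le \lfloor(\nu-1)/3\rfloor$, the one minimizing $\mu_1$ is attained at $s=1$, namely $K(1;\nu-4,3\times1)$ for $\nu\ne 7$ (and coincides with $K(2;5\times1)$ when $\nu=7$, which is why that case is singled out). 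Combined with the hypothesis $\mu_1(G)\le\mu_1(K(1;\nu-4,3\times1))$, this yields $\mu_1(G)=\mu_1(K(1;\nu-4,3\times1))$ and hence $G\cong K(1;\nu-4,3\times1)$, a contradiction unless $G$ is that extremal graph, which establishes both the factor conclusion and the equality characterization.

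The main obstacle is the last step: showing $\mu_1(K(1;\nu-4,3\times1))<\mu_1(K(s;\nu-3s-1,(2s+1)\times1))$ for all $s\ge2$ in the relevant range. I would approach this by the standard quotient-matrix technique: each $K(s;\nu-3s-1,(2s+1)\times1)$ has an equitable partition into three classes ($S$, the big clique, the isolated-vertex block), so $\mu_1$ is the largest root of an explicit $3\times3$ characteristic polynomial whose coefficients are polynomials in $\nu$ and $s$. For $s=1$ this is precisely the cubic governing $\beta(\nu)$-type behavior; for general $s$ one writes the analogous cubic $f_s(x)=0$ and shows $f_s(\mu_1(K(1;\cdots)))<0$ (or uses that the largest root is increasing in $s$) by estimating the cubic at the known value, using $\nu$ large relative to $s$. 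One also needs the boundary cases where the big clique degenerates, and the special arithmetic at $\nu=7$ where $\nu-4=3$ makes the two extremal families coincide; these I would check directly. A useful auxiliary bound is that $\mu_1$ of each such graph lies between its average distance-row-sum and its maximum, which pins down the comparison when the cubics are unwieldy. Throughout I would lean on Theorem~\ref{theorem11}'s combinatorial reduction — the identification of the extremal infeasible graph — so that only the spectral monotonicity and the $3\times3$ comparison remain to be supplied here.
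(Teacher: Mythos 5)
Your proposal is correct and follows essentially the same route as the paper: the Akiyama--Era characterization, monotonicity of $\mu_1$ under edge addition to reduce to the graphs $K(s;\nu-3s-1,(2s+1)\times 1)$, and equitable-partition quotient-matrix comparisons to show the minimizer over $s$ is $s=1$ (except at $\nu=7$, where the degenerate $s=2$ member $K(2;5\times 1)$ wins). The only cosmetic difference is that you embed $G$ directly as a spanning subgraph of the canonical extremal family, whereas the paper fixes a counterexample minimizing $\mu_1$ and then runs a case analysis on the components of $G-X$; and note that at $\nu=7$ the graphs $K(1;3,3\times 1)$ and $K(2;5\times 1)$ do not coincide — the latter simply has strictly smaller distance spectral radius — which your direct check would reveal.
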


\end{enumerate}
\section{Some preliminaries}

\begin{enumerate}
\item In this section, we present some necessary preliminary results, which will be used to prove our main results.

\quad Let $G$ be a graph with vertex set $V(G) = \{v_1, \ldots , v_\nu\}$ and edge set $E(G).$
For $v\in V(G)$, the neighborhood
$N_G(v)$ of $v$ is the set of vertices adjacent to $v$ in $G$, and the degree of $v$, denoted by $\operatorname{deg}_G(v)$, is the number
$|N_G(v)|$. For any $X \subseteq V(G)$, let $G[X]$ be the subgraph of $G$ induced by $X$, and write $G-X = G[V(G)\backslash X]$ if
$X\neq V(G)$. For convenience, we write $G-v$ for $G-\{v\}$. Let $E\subseteq E(G)$. Then, $G-E$ is the graph formed from $G$ by deleting the edges in $E$.
For convenience, we write $G-uv$ for $G-\{uv\}$.

 \quad 
The \emph{spectral radius} of a graph $G$ is the largest eigenvalue of the \emph{adjacency matrix} of $G$, which is defined as
the symmetric matrix $A(G)=(a_{ij})$, where $a_{ij}= 1$ if $v_i$ and $v_j$ are adjacent, and $a_{ij} = 0$ otherwise.
The \emph{signless Laplacian spectral radius} of a graph $G$ is the largest eigenvalue of its \emph{signless Laplacian matrix}
$K(G) = \operatorname{diag}(\deg_{G}(1), \deg_{G}(2), \ldots , \deg_{G}(\nu))+A(G)$, where $\operatorname{diag}(\deg_{G}(1), \deg_{G}(2), \ldots , \deg_{G}(\nu))$ is the degree diagonal matrix of $G$.

 \quad Let $G$ be a connected graph. The \emph{distance} between two vertices $v_i$ and $v_j$ in $G$, denoted
by $d_{ij}$, is the length of a shortest path from $v_i$ to $v_j$.
The \emph{distance matrix} $D(G)$ of a connected graph $G$ is defined
as $D(G) = (d_{ij})$. The distance matrix of $G$, denoted by $D(G)$, is
a $\nu \times \nu$ real symmetric matrix whose $(i, j)$-entry is $d_{ij}$. Then we can order the
eigenvalues of $D(G)$ as
 $$\mu_1(G) \geq \mu_2(G) \geq \cdots \geq \mu_\nu(G).$$ In view of the Perron-Frobenius theorem, $\mu_1(G)$ is always positive (unless $G$ is trivial)
and $\mu_1(G)\geq |\mu_j(G)|$ for $j = 2, 3, \cdots , \nu$, and we can call $\mu_1(G)$ the \emph{distance spectral
radius}.
\begin{lem}{\upshape (\cite{M1988})}\label{lem644}
Let $G$ be a connected graph with two nonadjacent vertices $u, v \in V(G)$. Then $\mu_1(G + uv) < \mu_1(G).$

\quad The following lemmas, following from the Perron-Frobenius theorem\cite{B2014, YYZL2017}, is well known.
\begin{lem}\label{lem4}
Let $G$ be a connected graph and let $H$ be a proper subgraph of $G$. Then $\kappa(H)<\kappa(G)$.
\end{lem}

\begin{lem}\label{lem444}
Let $G$ be a connected graph and let $H$ be a proper subgraph of $G$. Then $\rho(H)<\rho(G)$.
\end{lem}

\end{lem}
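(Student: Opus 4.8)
The plan is to combine the entrywise monotonicity of the distance matrix under edge addition with the Rayleigh--Ritz variational characterization of the largest eigenvalue of a symmetric matrix. Write $D = D(G)$ and $D' = D(G + uv)$; both are genuine distance matrices, since adding an edge to a connected graph preserves connectivity. My goal is to show $D' \le D$ entrywise with a strict drop in at least one entry, and then to transfer this strict drop to the Perron root.

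First I would record the structural comparison. For any pair of vertices $a, b$, every $a$--$b$ path of $G$ still exists in $G + uv$, so a shortest path cannot become longer; hence $d_{G+uv}(a,b) \le d_G(a,b)$ for all $a, b$, i.e. $D' \le D$ entrywise. Moreover, because $u$ and $v$ are nonadjacent in $G$ we have $d_G(u,v) \ge 2$, whereas $d_{G+uv}(u,v) = 1$, so the $(u,v)$ and $(v,u)$ entries decrease strictly. Thus the symmetric nonnegative matrix $N := D - D'$ satisfies $N \ge 0$ entrywise and $N \ne 0$.

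Next I would invoke the Perron--Frobenius theorem. Since $G + uv$ is connected on at least two vertices, $D'$ is an irreducible nonnegative matrix (every off-diagonal entry is a positive distance), so $\mu_1(G+uv)$ admits a strictly positive eigenvector $x > 0$, which I normalize so that $x^{\top} x = 1$; then $x^{\top} D' x = \mu_1(G+uv)$. Because $D$ is symmetric, the Rayleigh--Ritz principle gives $\mu_1(G) = \max_{\|y\| = 1} y^{\top} D y \ge x^{\top} D x$, and
\[
x^{\top} D x - x^{\top} D' x = x^{\top} N x = \sum_{i,j} N_{ij}\, x_i x_j > 0,
\]
since $N \ge 0$, $N \ne 0$, and every coordinate $x_i > 0$. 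Chaining these inequalities yields $\mu_1(G) \ge x^{\top} D x > x^{\top} D' x = \mu_1(G+uv)$, which is exactly the assertion.

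The one delicate point, and the step I would watch most carefully, is the strictness of the final inequality. Entrywise monotonicity $D' \le D$ by itself only yields $\mu_1(G+uv) \le \mu_1(G)$; the strict decrease relies on the Perron eigenvector of $D'$ being \emph{strictly} positive, so that the single positive entry of $N$ contributes a positive amount to the quadratic form $x^{\top} N x$. This is precisely why irreducibility of the distance matrix, equivalently connectivity of $G + uv$, is indispensable to the argument.
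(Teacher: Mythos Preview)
Your argument for the distance spectral radius inequality is correct and complete: the entrywise comparison $D' \le D$ with a strict drop at the $(u,v)$ position, combined with the strictly positive Perron eigenvector of the irreducible matrix $D'$ and the Rayleigh quotient bound for $D$, yields the strict inequality $\mu_1(G+uv) < \mu_1(G)$ exactly as you describe.

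However, there is no comparison to be made with the paper's own proof, because the paper does not supply one. All three lemmas in this block are quoted from the literature (Min\'c's book for Lemma~\ref{lem644}, and general Perron--Frobenius references for Lemmas~\ref{lem4} and~\ref{lem444}) and are stated without proof; the paper merely says they are ``well known.'' Your proposal is therefore a self-contained justification that the paper elected to omit. Note also that the quoted block actually contains three separate assertions; you have addressed only the distance-spectral-radius one, though the other two (strict monotonicity of $\kappa$ and $\rho$ under passing to proper subgraphs of a connected graph) follow by essentially the same Perron--Frobenius mechanism applied to $K(G)$ and $A(G)$ in place of $D(G)$.
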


\quad\quad In mathematical literature, it is interesting to study graph factors.
For example,
Frobenius \cite{F1917} initiated the characterization of perfect matchings, who showed that an $\nu$-vertex bipartite graph contains a perfect matching if and only if the cardinality of each vertex cover is at least $\frac{\nu}{2}$.
The Hall Theorem provides a sufficient and necessary condition for a bipartite graph containing a perfect matching; see \cite{O2017}.
Tutte \cite{T1947} gave a necessary and sufficient condition for the existence of a $\{K_2\}$-factor in a graph, which is well known as Tutte's $1$-Factor Theorem. Bondy
\cite{B1976} gave a necessary and sufficient condition of a tree containing a $\{K_2\}$-factor.
In 2020, O \cite{S2021} showed that there is a close relationship between the spectral
radius and $\{K_2\}$-factor by Tutte's $1$-Factor Theorem. He established a sharp
upper bound on the number of edges (resp. spectral radius) of a graph without
a $\{K_2\}$-factor.
In 1953, Tutte \cite{T1953}  gave a necessary and sufficient condition for the existence of a $\{K_2, C_n\}$-factor in a graph. 
 Amahashi and Kano  \cite{AK82} gave a sufficient and necessary condition for the existence of a $\{K_{1,i} : 1\leq i\leq r\}$-factor with $r\geq2$.
 Egawa and Furuya \cite{YM2018}  gave a sufficient condition for the existence of a $\{P_2, C_{2i+1} : i \geq 2\}$-factor.
The distance eigenvalues of graphs have been studied for many years. For early work, see Graham
and Lov$\acute{a}$sz \cite{LL1978}. Ruzieh and Powers \cite{RP1990} and Stevanovi$\acute{c}$ and Ili$\acute{c}$ \cite{SI2010} showed that among n-vertex
connected graphs, the path $P_n$ is the unique graph with maximum distance spectral radius. For more advances along this line, we may
consult \cite{{YMA1997},{HJ1985},{S1974},{W1982}}.

\begin{lem}{\upshape (\cite{N2017})}\label{lem41} Let $G$ be a $\nu$-vertex connected graph and let $\mathbf{x} = (x_1, x_2, \ldots ,
x_\nu)^T$ be the Perron vector of $K(G)$ corresponding to $\kappa(G)$. If $v_i, v_j\in V(G)$
satisfy $N_G(v_i) \setminus \{v_j\} = N_G(v_j) \setminus \{v_i\}$, then $x_i = x_j$.
\end{lem}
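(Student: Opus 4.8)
\proof

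The plan is to extract the claim directly from the eigenvector equation of the signless Laplacian at the two coordinates $i$ and $j$. Recall $K(G)=\operatorname{diag}\bigl(\deg_G(v_1),\dots,\deg_G(v_\nu)\bigr)+A(G)$, so the identity $K(G)\mathbf{x}=\kappa(G)\mathbf{x}$ reads, at any coordinate $k$,
\[
\bigl(\kappa(G)-\deg_G(v_k)\bigr)\,x_k=\sum_{v_l\in N_G(v_k)}x_l .
\]
Put $W:=N_G(v_i)\setminus\{v_j\}=N_G(v_j)\setminus\{v_i\}$. Since $N_G(v_i)$ and $N_G(v_j)$ are obtained from $W$ by possibly adjoining $v_j$, respectively $v_i$, they have the same cardinality; write $d:=\deg_G(v_i)=\deg_G(v_j)$.

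Next I would split according to whether $v_i$ and $v_j$ are adjacent. If $v_iv_j\notin E(G)$, then $N_G(v_i)=W=N_G(v_j)$, so the two instances of the displayed identity (for $k=i$ and $k=j$) have equal right-hand sides, and subtracting them gives $\bigl(\kappa(G)-d\bigr)(x_i-x_j)=0$. If $v_iv_j\in E(G)$, then $N_G(v_i)=W\cup\{v_j\}$ and $N_G(v_j)=W\cup\{v_i\}$, so subtracting the two instances gives $\bigl(\kappa(G)-d\bigr)(x_i-x_j)=x_j-x_i$, i.e. $\bigl(\kappa(G)-d+1\bigr)(x_i-x_j)=0$. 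In both cases $x_i=x_j$ will follow once the scalar factor multiplying $x_i-x_j$ is shown to be nonzero.

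Hence the only substantive point is the inequality $\kappa(G)>d$ (which also forces $\kappa(G)-d+1>0$). Assuming $v_i\ne v_j$, connectedness of $G$ gives $d=\deg_G(v_i)\ge1$, so the $d$ edges incident with $v_i$ form a copy of the star $K_{1,d}$ inside the connected graph $G$. By Lemma~\ref{lem4}, $\kappa$ is strictly monotone under proper subgraphs, and $\kappa(K_{1,d})=d+1$; therefore $\kappa(G)\ge d+1>d$, strictly unless $G=K_{1,d}$, in which case $\kappa(G)=d+1>d$ as well. (Equivalently, one may quote the standard bound $\kappa(G)\ge\Delta(G)+1$ for any connected graph with an edge.) This yields $x_i=x_j$ and finishes the argument. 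The only obstacle here is this last elementary estimate on $\kappa(G)$; everything else is a one-line manipulation of the eigenvector equation, with the adjacency case split as the sole bookkeeping.

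\proofend
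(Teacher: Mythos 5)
Your proof is correct and complete: the coordinate-wise eigenvector identity, the split on whether $v_iv_j\in E(G)$, and the estimate $\kappa(G)\ge\Delta(G)+1>d$ (via Lemma~\ref{lem4} applied to the star at $v_i$) together give $x_i=x_j$ in both cases. The paper itself supplies no proof of this lemma --- it is quoted from Nikiforov's work --- and your argument is the standard direct one, so there is nothing substantive to compare against.
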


\quad\quad Let $M$ be a real matrix of order $n$. And assume that $M$ can be described as
$$M=
\left(
  \begin{array}{ccc}
    M_{11} & \cdots & M_{1s} \\
    \vdots &  \ddots& \vdots\\
    M_{s1} & \cdots & M_{ss} \\
  \end{array}
\right)
$$
with respect to the partition $\pi : V = V_1\cup\cdots\cup V_s$, where $M_{ij}$ denotes the submatrix (block) of $M$ formed by rows in $V_i$
and columns in $V_j$. Let $m_{ij}$ denote the average row sum of $M_{ij}$. Then matrix $M_\pi = (m_{ij})$ is called the quotient matrix of $M$. If
for any $i$, $j$, $M_{ij}$ admits constant row sum, then $M_\pi$ is called the equitable matrix of $M$ and the partition is called equitable.
\begin{lem}{\upshape (\cite{{BH2012},LMWW2019})}\label{lem5}
 Let $M$ be a square matrix with an equitable partition $\pi$ and let $M_\pi$ be the corresponding quotient matrix.
Then every eigenvalue of $M_\pi$ is an eigenvalue of $M$. Furthermore, if $M$ is nonnegative and $M_\pi$ is irreducible, then the largest
eigenvalues of $M$ and $M_\pi$ are equal.
\end{lem}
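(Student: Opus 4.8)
The plan is to realize the quotient matrix through the \emph{characteristic matrix} of the partition and to transfer eigenvectors between $M$ and $M_\pi$ along it. First I would introduce the $n\times s$ matrix $S=(s_{vk})$ defined by $s_{vk}=1$ if the vertex indexed by $v$ lies in $V_k$ and $s_{vk}=0$ otherwise. The columns of $S$ are the indicator vectors of the disjoint, nonempty cells $V_1,\dots,V_s$, so they are linearly independent, $S$ has full column rank $s$, and in particular $Sy\neq\mathbf{0}$ whenever $y\neq\mathbf{0}$. The crucial ingredient is the intertwining relation $MS=SM_\pi$, which I would verify entrywise: for $v\in V_i$ the $(v,k)$-entry of $MS$ equals $\sum_{w\in V_k}M_{vw}$, which is exactly the row sum (for row $v$) of the block $M_{ik}$ and, by equitability, equals the constant $m_{ik}$; meanwhile the $(v,k)$-entry of $SM_\pi$ equals $\sum_{l=1}^{s}s_{vl}(M_\pi)_{lk}=(M_\pi)_{ik}=m_{ik}$, since $s_{vl}=1$ precisely when $l=i$. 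Thus the two matrices agree.

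For the first assertion I would push eigenvectors of $M_\pi$ forward. If $M_\pi y=\lambda y$ with $y\neq\mathbf{0}$, then the intertwining relation gives $M(Sy)=SM_\pi y=\lambda(Sy)$, and since $Sy\neq\mathbf{0}$ by the full-rank property of $S$, the vector $Sy$ is an eigenvector of $M$ for the eigenvalue $\lambda$. Hence every eigenvalue of $M_\pi$ is an eigenvalue of $M$, which is the first claim.

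For the second assertion I would invoke the Perron--Frobenius theorem on both matrices. Since $M\geq 0$, every block row sum satisfies $m_{ik}\geq 0$, so $M_\pi\geq 0$ as well; being also irreducible, $M_\pi$ possesses by Perron--Frobenius a strictly positive eigenvector $y>\mathbf{0}$ whose eigenvalue is its spectral radius (its largest eigenvalue). By the previous paragraph $Sy$ is an eigenvector of $M$ for that same value, and because each vertex lies in exactly one cell while $y>\mathbf{0}$, the vector $Sy$ is itself strictly positive. The decisive step, which I regard as the crux of the argument, is the standard fact that a nonnegative matrix admitting a \emph{strictly positive} eigenvector must have that eigenvalue equal to its spectral radius: applying the Collatz--Wielandt bound $\min_v \frac{(Mx)_v}{x_v}\le \rho(M)\le \max_v \frac{(Mx)_v}{x_v}$ to $x=Sy>\mathbf{0}$ and using $Mx=\rho(M_\pi)x$ makes every ratio equal to $\rho(M_\pi)$, forcing $\rho(M)=\rho(M_\pi)$. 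Since the largest eigenvalue of a nonnegative irreducible matrix is exactly its spectral radius, this yields the claimed equality of largest eigenvalues. Everything here is purely algebraic except this final implication, and it is precisely the hypothesis of irreducibility of $M_\pi$ that makes it go through, by guaranteeing the positivity of $y$ and hence of $Sy$.
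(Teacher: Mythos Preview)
The paper does not supply its own proof of this lemma; it is quoted from \cite{BH2012,LMWW2019} as a known result, so there is nothing in the paper to compare against. Your argument is correct and is precisely the standard one found in those references: the characteristic matrix $S$ of the partition satisfies the intertwining relation $MS=SM_\pi$, which lifts eigenvectors of $M_\pi$ to eigenvectors of $M$, and under the nonnegativity/irreducibility hypotheses the Perron vector of $M_\pi$ lifts to a strictly positive eigenvector of $M$, whence the Collatz--Wielandt bounds force equality of the spectral radii.
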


\begin{lem}{\upshape (\cite{M1988})}\label{lem6}
Let $A$ be a real symmetric $n \times n$ matrix and let $B$ be a $m \times m$ principal submatrix of $A$ with $m < n$. Let
$\lambda_1(A) \geq \lambda_2(A) \geq \cdots \geq \lambda_n(A)$ be the eigenvalues of $A$, and $\lambda_1(B) \geq \lambda_2(B)\geq \cdots \geq\lambda_m(B)$ be the eigenvalues of $B$. Then for
$i = 1, \ldots , m,$
$$\lambda_i(A)\geq \lambda_i(B) \geq \lambda_{n-m+i}(A).$$
\end{lem}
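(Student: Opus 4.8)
The plan is to derive this statement---the Cauchy interlacing theorem---from the Courant--Fischer (min--max) characterisation of the eigenvalues of a real symmetric matrix, which I take as a standard tool. Recall that for a real symmetric $n\times n$ matrix $A$ with $\lambda_1(A)\ge\cdots\ge\lambda_n(A)$, and for each $1\le k\le n$,
$$\lambda_k(A)=\max_{\dim U=k}\ \min_{0\ne x\in U}\frac{x^{T}Ax}{x^{T}x}=\min_{\dim W=n-k+1}\ \max_{0\ne x\in W}\frac{x^{T}Ax}{x^{T}x},$$
where the extrema run over linear subspaces of $\R^{n}$. Let $I\subseteq\{1,\dots,n\}$ with $|I|=m$ be the index set of the principal submatrix $B$, and let $\iota\colon\R^{m}\to\R^{n}$ be the coordinate embedding that places the entries of $y\in\R^{m}$ into the coordinates indexed by $I$ and $0$ elsewhere. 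The single fact I need is that $\iota$ is a linear injection with $m$-dimensional image and that $(\iota y)^{T}A(\iota y)=y^{T}By$ for all $y$, because the quadratic form $x\mapsto x^{T}Ax$ evaluated on vectors supported on $I$ only sees the submatrix of $A$ indexed by $I\times I$, that is, $B$; hence the Rayleigh quotients obey $R_A(\iota y)=R_B(y)$ for $y\ne 0$.

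For the inequality $\lambda_k(A)\ge\lambda_k(B)$ I would use the max--min form: choose, by the max--min formula for $B$, a $k$-dimensional subspace $U_0\subseteq\R^{m}$ with $\min_{0\ne y\in U_0}R_B(y)=\lambda_k(B)$; then $\iota(U_0)$ is a $k$-dimensional subspace of $\R^{n}$ on which the minimum of $R_A$ is $\lambda_k(B)$, so $\lambda_k(A)=\max_{\dim U=k}\min R_A\ge\lambda_k(B)$. For $\lambda_k(B)\ge\lambda_{n-m+k}(A)$ I would use the min--max form: choose an $(m-k+1)$-dimensional subspace $W_0\subseteq\R^{m}$ with $\max_{0\ne y\in W_0}R_B(y)=\lambda_k(B)$; then $\iota(W_0)$ is an $(m-k+1)$-dimensional subspace of $\R^{n}$ on which the maximum of $R_A$ is $\lambda_k(B)$, and since $m-k+1=n-(n-m+k)+1$ the min--max formula gives $\lambda_{n-m+k}(A)=\min_{\dim W=m-k+1}\max R_A\le\lambda_k(B)$. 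Combining the two bounds yields $\lambda_k(A)\ge\lambda_k(B)\ge\lambda_{n-m+k}(A)$ for $k=1,\dots,m$, which is the claim.

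There is no serious obstacle in this argument; the two points I would be careful about are: first, verifying that $\iota$ genuinely preserves the quadratic form, which is exactly where the hypothesis that $B$ is a \emph{principal} submatrix (rather than an arbitrary $m\times m$ block) enters; and second, the index bookkeeping in the lower bound, namely recognising that the subspace $\iota(W_0)$ has dimension $m-k+1=n-(n-m+k)+1$, precisely the dimension the min--max formula for $\lambda_{n-m+k}(A)$ requires. An alternative would be to prove the one-step case $m=n-1$ first (for instance by restricting the extremal subspaces of the min--max formula to a single coordinate hyperplane, or via a rank/interlacing perturbation argument) and then iterate the deletion $n-m$ times; but the Courant--Fischer argument above treats general $m$ in one stroke, so that is the version I would write out in full.
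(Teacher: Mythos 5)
Your proposal is correct: both halves of the interlacing inequality follow exactly as you describe from the Courant--Fischer max--min and min--max characterisations, the key observation being that the coordinate embedding $\iota$ satisfies $(\iota y)^{T}A(\iota y)=y^{T}By$ precisely because $B$ is a \emph{principal} submatrix, and the dimension count $m-k+1=n-(n-m+k)+1$ is right. The paper itself gives no proof of this lemma --- it is quoted as a known result (Cauchy interlacing) with a citation to the literature --- so there is nothing to compare against; your argument is the standard textbook proof and is complete as written.
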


\quad\quad The following lemma can be easily derived by the Rayleigh quotient \cite{HJ1985}.
\begin{lem}\label{lem9}
Let $G$ be a connected graph with order $\nu$. Then
$$\mu_1(G) = \max_{\mathbf{x}\neq\mathbf{0}}\frac{\mathbf{x}^T D(G)\mathbf{x}}{\mathbf{x}^T\mathbf{x}}
\geq \frac{\mathbf{1}^TD(G)\mathbf{1}}{\mathbf{1}^T \mathbf{1}}
= \frac{2W(G)}{\nu}
,$$
where $\mathbf{1} = (1, 1,\ldots , 1)^T$ and $W(G) =\sum\limits_{i<j}d_{ij}$.
\end{lem}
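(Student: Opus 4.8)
The plan is to establish the three relations in the displayed chain from left to right: the variational (Rayleigh--Ritz) identity for the largest eigenvalue, the inequality obtained by inserting a convenient test vector, and the arithmetic identification of the resulting quotient with the Wiener index $W(G)$. None of the steps presents a genuine obstacle; this is the standard derivation alluded to in \cite{HJ1985}, and the only points meriting care are that $D(G)$ is a bona fide real symmetric matrix and that the Rayleigh quotient is well defined.

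For the first equality I would use that $D(G)$ is real and symmetric, hence orthogonally diagonalizable with an orthonormal eigenbasis $\mathbf{u}_1,\ldots,\mathbf{u}_\nu$ and real eigenvalues $\mu_1(G)\geq\cdots\geq\mu_\nu(G)$. Expanding any nonzero $\mathbf{x}=\sum_k c_k\mathbf{u}_k$ gives $\mathbf{x}^TD(G)\mathbf{x}=\sum_k\mu_k(G)c_k^2\leq\mu_1(G)\sum_k c_k^2=\mu_1(G)\,\mathbf{x}^T\mathbf{x}$, with equality attained at $\mathbf{x}=\mathbf{u}_1$; dividing through by $\mathbf{x}^T\mathbf{x}>0$ yields $\mu_1(G)=\max_{\mathbf{x}\neq\mathbf{0}}\frac{\mathbf{x}^TD(G)\mathbf{x}}{\mathbf{x}^T\mathbf{x}}$, which is precisely the Rayleigh--Ritz principle. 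The inequality in the statement is then immediate, since restricting the maximum to the single test vector $\mathbf{x}=\mathbf{1}$ can only lower its value.

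It remains to evaluate the quotient at $\mathbf{x}=\mathbf{1}$. Because every entry of $\mathbf{1}$ equals $1$, the numerator is $\mathbf{1}^TD(G)\mathbf{1}=\sum_{i=1}^\nu\sum_{j=1}^\nu d_{ij}$; as $D(G)$ is symmetric with zero diagonal ($d_{ii}=0$ and $d_{ij}=d_{ji}$), this double sum collapses to $2\sum_{i<j}d_{ij}=2W(G)$, while the denominator is $\mathbf{1}^T\mathbf{1}=\nu$. Hence $\frac{\mathbf{1}^TD(G)\mathbf{1}}{\mathbf{1}^T\mathbf{1}}=\frac{2W(G)}{\nu}$, which closes the chain and proves the lemma. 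The sole role of connectedness is to guarantee that each $d_{ij}$ is finite, so that $D(G)$ has real entries and all of the above manipulations are legitimate.
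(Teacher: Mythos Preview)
Your proof is correct and follows exactly the approach indicated by the paper, which does not give a detailed proof but simply remarks that the lemma ``can be easily derived by the Rayleigh quotient \cite{HJ1985}.'' You have filled in the standard details of that derivation---the Rayleigh--Ritz variational characterization, the test vector $\mathbf{1}$, and the identification of the resulting quotient with $2W(G)/\nu$---and nothing more is needed.
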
 

\quad\quad Inspired by  A. Amahashi \cite{A1985}, we establish two spectral sufficient conditions for a graph to be a $\{K_2\}$-factor, respectively.
Inspired by Akiyama J. \cite{JH1980}, we get three spectral sufficient conditions
for a graph to be a $\{K_{1,1}, K_{1,2},C_m: m\geq3\}$-factor, respectively.

\quad\quad The rest of this paper is arranged as follows: In Section 3
we present a $\{K_2\}$-factor. In Section 4
we present a $\{K_{1,1}, K_{1,2},C_m: m\geq3\}$-factor.
In Section 5 we present some extremal graphs.
\end{enumerate}

\section{A $\{K_2\}$-factor}

\begin{enumerate}
\item {\bf Signless Laplacian spectral radius}

	 We give the proof of Theorem \ref{theorem3}, which gives a sufficient
condition via the signless Laplacian spectral radius of a connected graph to ensure that the graph contains a
$\{K_2\}$-factor.
In 1985, Amahashi \cite{A1985} gave a sufficient and necessary condition for the existence of a $\{K_2\}$-factor.
\begin{lem}{\upshape (\cite{A1985})}\label{lem3.1}
 Let $G$ be a graph. Then $G$  contains a
$\{K_2\}$-factor if and only if for every set
$X \subseteq V (G)$, $\operatorname{o}(G -X) \leq |X|$, where $\operatorname{o}(G -X)$ is the number of odd components(components with odd order) in a graph $G -X$.
\end{lem}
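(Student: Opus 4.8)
The final statement is the classical Tutte $1$-Factor Theorem (here a $\{K_2\}$-factor is exactly a perfect matching), so my plan is to prove the two implications separately, with essentially all the work in the sufficiency direction.

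\emph{Necessity} is a short counting argument. Suppose $G$ has a perfect matching $M$ and fix $X\subseteq V(G)$. Let $C_1,\dots,C_q$ be the odd components of $G-X$. Since $|V(C_i)|$ is odd, $M$ cannot saturate $C_i$ using only internal edges, so some vertex of $C_i$ is matched by $M$ to a vertex outside $C_i$; as $G-X$ has no edges between distinct components, that partner must lie in $X$. These $q$ edges are pairwise disjoint (they belong to $M$), so they occupy $q$ distinct vertices of $X$, giving $\operatorname{o}(G-X)=q\le|X|$.

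For \emph{sufficiency} I would argue by contradiction. Assume $\operatorname{o}(G-X)\le|X|$ for every $X$ but $G$ has no perfect matching; taking $X=\emptyset$ already forces $\nu=|V(G)|$ to be even. Since adding an edge never increases $\operatorname{o}(G-X)$ (it either leaves the components of $G-X$ alone or merges two of them), the hypothesis is preserved under edge addition, so I may pass to an edge-maximal supergraph $G^*$ on $V(G)$ that still satisfies the condition but has no perfect matching; by maximality, adding any nonedge of $G^*$ creates one. Let $U$ be the set of vertices of degree $\nu-1$ in $G^*$. The key claim is that every component of $G^*-U$ is complete. Granting this, I finish as follows: applying the hypothesis with $X=U$ gives $\operatorname{o}(G^*-U)\le|U|$, so I can match one vertex of each odd component of $G^*-U$ to a distinct vertex of $U$ (possible since every $u\in U$ is adjacent to all other vertices), then perfectly match the now-even remainder of each odd component and each even component internally (each is complete), and finally pair up the leftover vertices of $U$ among themselves; their number $|U|-\operatorname{o}(G^*-U)$ is even by a parity count using that $\nu$ is even, and $U$ is complete. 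This yields a perfect matching of $G^*$, a contradiction.

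It remains to prove the claim, and this is where the main obstacle lies. If some component of $G^*-U$ is not complete, I extract a path $a\,b\,c$ with $ab,bc\in E(G^*)$ and $ac\notin E(G^*)$; since $b\notin U$ there is also a vertex $d$ with $bd\notin E(G^*)$. By maximality $G^*+ac$ and $G^*+bd$ have perfect matchings $M_1\ni ac$ and $M_2\ni bd$. The delicate step is the analysis of the symmetric difference $M_1\triangle M_2$, which is a disjoint union of even cycles alternating between the two matchings, both $ac$ and $bd$ appearing on it. When $ac$ and $bd$ lie on different cycles, the matching that agrees with $M_2$ off the cycle $C$ through $bd$ and with $M_1$ on $C$ avoids both $ac$ and $bd$, hence is a perfect matching of $G^*$. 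The hard case is when $ac$ and $bd$ lie on a common cycle $C$: here I would split $C$ into its two alternating arcs and use the edge $ab$ (or $bc$) to stitch them into a perfect matching of $V(C)$ lying entirely in $E(G^*)$, which combined with $M_1\setminus E(C)$ again produces a perfect matching of $G^*$. Either way the contradiction establishes the claim, and I expect this same-cycle rerouting to be the crux of the whole argument.
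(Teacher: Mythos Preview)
Your proof is correct; it is the standard edge-maximal counterexample proof of Tutte's $1$-Factor Theorem (the version usually attributed to Lov\'asz). The paper, however, does not prove this lemma at all: it simply quotes it as a known result with a citation, and then uses it as a black box (in fact only the easy, necessity direction is ever invoked, via the derived Lemma~\ref{lem3}). So there is no proof in the paper to compare against, but nothing is wrong with what you wrote; your same-cycle rerouting using the edge $ab$ is exactly the classical argument and goes through as you outlined.
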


\begin{lem}\label{lem3}
Let $G$ be a connected graph. If $G$
does not have a $\{K_2\}$-factor, then there exists a non-empty subset
$X \subseteq V (G)$ such that $\operatorname{o}(G-X) > |X|$. Furthermore, if $|V (G)|$ is even,
then
$$
\begin{aligned}
\operatorname{o}(G- X)&\equiv |X|&&(mod&2)
\end{aligned}
$$
and $$\operatorname{o}(G- X) \geq |X| + 2.$$
\end{lem}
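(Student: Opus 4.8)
The plan is to start from Lemma \ref{lem3.1} (Amahashi's characterization): if $G$ has no $\{K_2\}$-factor, then there exists $X \subseteq V(G)$ with $\operatorname{o}(G-X) > |X|$. First I would argue that such an $X$ must be non-empty. If $X = \emptyset$, then $\operatorname{o}(G) > 0$; but $G$ is connected, so $G$ itself is the only component, and $\operatorname{o}(G) \le 1$, forcing $\operatorname{o}(G) = 1$, i.e. $|V(G)|$ is odd. Under the hypothesis that $|V(G)|$ is even this is impossible, and even without that hypothesis, when $|V(G)|$ is even we can immediately take any vertex $v$ and check that $X = \{v\}$ works, or simply note the non-emptiness follows once we establish the parity statement below. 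So I would phrase it as: pick $X$ with $\operatorname{o}(G-X) > |X|$ of minimum cardinality (or just any such $X$), and observe $X \neq \emptyset$ because $\operatorname{o}(G) \le 1 \le |X|$ would be needed but $0 > 0$ fails; hence if the only candidate were $X=\emptyset$ we'd need $\operatorname{o}(G)\ge 1$, and then $|V(G)|$ odd — contradiction with $|V(G)|$ even. Actually the cleanest route: the parity argument below shows $\operatorname{o}(G-X)\equiv |X|\pmod 2$, so $\operatorname{o}(\emptyset)=\operatorname{o}(G)$ would be even, but $\operatorname{o}(G)\le 1$ and $\operatorname{o}(G)>0$ gives $\operatorname{o}(G)=1$, odd — contradiction. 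Thus $X\neq\emptyset$.

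The heart of the lemma is the parity congruence. The key identity is a counting of vertices modulo $2$: write $V(G) = X \sqcup V(G-X)$, and decompose $G - X$ into its connected components. Each even component contributes an even number of vertices; each odd component contributes an odd number. Hence
$$|V(G-X)| \equiv \operatorname{o}(G-X) \pmod 2.$$
Therefore $|V(G)| = |X| + |V(G-X)| \equiv |X| + \operatorname{o}(G-X) \pmod 2$. Since $|V(G)|$ is even, we get $|X| + \operatorname{o}(G-X) \equiv 0 \pmod 2$, i.e. $\operatorname{o}(G-X) \equiv |X| \pmod 2$. This is the desired congruence, and it is genuinely the only substantive step.

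Finally, combining the strict inequality $\operatorname{o}(G-X) > |X|$ with the congruence $\operatorname{o}(G-X) \equiv |X| \pmod 2$: two integers that are congruent mod $2$ and satisfy a strict inequality must differ by at least $2$, so $\operatorname{o}(G-X) \ge |X| + 2$. This also re-confirms $X$ is non-empty, since $\operatorname{o}(G-X)\ge 2$ forces $G-X$ to have at least two components, which is impossible for $X=\emptyset$ as $G$ is connected. I do not expect any real obstacle here; the only point requiring a moment's care is making the non-emptiness claim rigorous and choosing the order of the three conclusions so that each is available when needed — I would prove non-emptiness and the congruence essentially together, then deduce the gap-of-$2$ bound last. $\Box$
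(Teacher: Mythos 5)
Your proposal is correct and follows essentially the same route as the paper: invoke Amahashi's characterization (Lemma \ref{lem3.1}) to obtain $X$ with $\operatorname{o}(G-X)>|X|$, derive the congruence $\operatorname{o}(G-X)\equiv|X|\pmod 2$ from the vertex count $|V(G)|=|X|+\sum(\text{component orders})$, and combine strict inequality with equal parity to get the gap of $2$. If anything, you are more careful than the paper, which neither spells out the parity computation nor addresses why $X$ must be non-empty (a point that, as you correctly observe, genuinely requires the evenness of $|V(G)|$, since for example $K_3$ admits no non-empty $X$ with $\operatorname{o}(G-X)>|X|$).
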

\begin{proof}
By Lemma \ref{lem3.1}, it is easy to see that there exists a subset $X \subseteq V (G)$
satisfying $\operatorname{o}(G - X) > |X|$. Assume $|V (G)|$ is even. Then $\operatorname{o}(G- X)$ is
odd (resp. even) if and only if $|X|$ is odd (resp. even), which is equivalent to
say that $|X|$ is odd (resp. even). Hence, $\operatorname{o}(G-X) \geq|X| + 2$. This completes the proof of Lemma \ref{lem3}.
\end{proof}

\quad\quad Now we give a proof of Theorem \ref{theorem3}.

\begin{proof}
 We prove it by contradiction. $G$ has no $\{K_2\}$-factor. By Lemma \ref{lem3},  there
exists some nonempty subset  $X\subseteq V(G)$ satisfying $$\operatorname{o}(G-X)\geq |X| + 2.$$ 
 Then $G$ is a spanning subgraph of $$H_1 = K(|X|; \nu_1,\nu_2, \cdots, \nu_{|X|+2})$$ for some positive odd integers
$\nu_1\geq\nu_2 \geq \cdots \geq \nu_{|X|+2}$ with $\sum\limits_{i=1}^{|X|+2}\nu_i = \nu-|X|$. That is to say, $\kappa(G) \leq \kappa(H_1)$, where the equality holds if and only if $G\cong H_1$. 

\quad\quad In order to characterize the structure of $G$, we need the following fact.

{\bf Fact 1.}  Let $\nu = \sum\limits_{i=1}^{s}\nu_i+|X|$. If $\nu_1 < \nu-|X|-r(s-1)$ and $\nu_1 \geq \nu_2 \geq \cdots \nu_s \geq r \geq 1$, then 
$$\kappa(K(|X|, \nu-|X|-r(s-1), (s-1)\times r))>\kappa(K(|X|; \nu_1, \nu_2, \cdots , \nu_s )).$$

\begin{proof}
Let $G =K(|X|; \nu_1, \cdots , \nu_s )$ and $$H_2 = K(|X|; \nu-|X|-r(s-1), (s-1)\times r).$$ Consider the
partition: $V(G) = V(K_{|X|}) \cup V(K_{\nu_1}) \cup V(K_{\nu_2}) \cup \cdots \cup V(K_{\nu_s} )$, where 
$V(K_{|X|}) = \{u_1, u_2,\ldots , u_{|X|}\}$, and 
$V(K_{\nu_j}) = \{w_{j_1}, w_{j_2}, \ldots , w_{j_{\nu_i}}\}$
for $1\leq j \leq s.$

Let 
$$
\begin{aligned}
E_1=& \{w_{j_l}w_{j_q}|2 \leq j \leq s,1 \leq l \leq \nu_j-r,  \nu_j-r + 1  \leq q \leq \nu_j \},
\end{aligned}
$$
$$E_2=\{w_{1_l}w_{j_i}| 1 \leq l \leq \nu_1, 2 \leq j \leq s, 1\leq i \leq \nu_j-r\}$$ and
 $$E_3 = \{w_{j_i}w_{l_q}|2 \leq j \leq s-1, j+1 \leq l \leq s; 1\leq i \leq \nu_j-r, 1 \leq q \leq \nu_l-r\}.$$ 
Obviously, $G + E_1-E_2-E_3\cong H_2$.

\quad\quad Suppose $\mathbf{z} = (z_1, \ldots , z_\nu)^T$ is 
the Perron vector of $K (G)$, and let $z_j$ denote the entry of $\mathbf{z}$ corresponding to the vertex $v_j \in V(G)$. 
 By Lemma \ref{lem41},
one has $z_a = z_b$ for all $v_a, v_b$ in $V (K_{|X|})$ (resp. $V(K_{\nu_c})$ for $1\leq c\leq s)$.
For convenience, 
let $z_p = x_1$ for all $v_p \in V(K_{|X|})$ and $z_p = y_j$ for all $v_p \in V(K_{\nu_j})$ where $1 \leq j \leq s$.
Together with  $K (G)\mathbf{x} = \kappa(G)\mathbf{x},$ we get
$$\kappa(G)y_1 = |X| x_1 + (2\nu_1 +|X|-2)y_1,$$
$$\kappa(G)y_i = |X| x_1 + ( 2\nu_i+|X|-2)y_i, (i = 2, \ldots , s).$$
Thus, $$[\kappa(G)-( 2\nu_1+|X| -2)]y_1 = [\kappa(G)-(2\nu_i+|X|-2)]y_i,$$ where $2 \leq i \leq s.$ Note that $K_{|X|+\nu_1}$ and $K_{|X|+\nu_2}$ are two proper
subgraphs of $G$. Combining with Lemma \ref{lem4}, we obtain $\kappa(G) > \kappa(K_{|X|+\nu_1}) = 2(\nu_1+|X|-1)$ and $\kappa(G) > \kappa(K_{|X|+\nu_i}) = 2(\nu_i+|X| - 1).$ By $\nu_1 \geq \nu_2 \geq \cdots\geq \nu_s$, we have $y_1 \geq y_i$ for $2 \leq i \leq s$. By the Rayleigh quotient, we obtain
$$
\begin{aligned}
\kappa(H_2) -\kappa(G)\geq& \mathbf{z}^T( K (H_2)- K (G))\mathbf{z}\\
=& 2(\sum_{i=2}^{s}(\nu_1y_1- ry_i)(\nu_i-r)y_i +\sum_{j=2}^{s-1}\sum_{i=j+1}^{s}(\nu_j- r)(\nu_i- r)y_i y_j)\\
&+\sum_{i=2}^{s}((\nu_1- 2r)y_i^2+\nu_1y_1^2) +\sum_{j=2}^{s-1}\sum_{i=j+1}^{s}(\nu_j- r)(\nu_i- r)( y_i^2+ y_j^2)\\
>& 0.
\end{aligned}
$$
 This completes the proof of Fact 1.
\end{proof}

We denote minimum degree $\delta(G) = \operatorname{min}\{\operatorname{deg}_G(v) : v \in V (G)\}$.
We proceed by the following three possible cases.

{\bf Case 1.} $|X| < \delta(G).$ 

\quad\quad In this case, let $$H_3 = K(|X|; \nu-(|X|+1)(\delta(G)-|X|+1)-|X|, (|X|+1)\times (\delta(G)-|X|+1)).$$ Recall that $G$ is a spanning subgraph
of $H_1 = K(|X|; \nu_1, \nu_2, \cdots, \nu_{|X|+2})$, where $\nu_1 \geq \nu_2 \geq \cdots \geq \nu_{|X|+2}$ and $|X|+\sum\limits_{i=1}^{|X|+2}\nu_i = \nu$. 
Note that $\delta(G)\leq\delta(H_1)$, one obtains that $\nu_{|X|+2} \geq \delta -|X|+ 1$. 
Together with Fact 1, we obtain $q(H_3)\geq q(H_1)$, with equality if and only
if $(\nu_1, \nu_2, \ldots , \nu_{|X|+2}) = (\nu -(|X|+1)(\delta(G)-|X| + 1)- |X|, \delta(G)-|X| + 1, \ldots , \delta(G)-|X| + 1)$.

\quad\quad In what follows, we consider $|X| \geq 1$. 
 Assume that $\kappa(H_3) \geq 2[\nu - (|X|+1)(\delta(G) -|X|+ 1)]$. Suppose $\mathbf{x} = (x_1, \ldots , x_\nu)^T$ is
the Perron vector of $K (G_3)$, and let $x_j$ denote the entry of $\mathbf{x}$ corresponding to the vertex $v_j \in V(G_3)$. 
By Lemma \ref{lem41},
one has $x_a = x_b$ for all $v_a, v_b$ in $V (K_{|X|})$ (resp. $V((|X|+1)K_{\delta(G)-|X|+1})$ and $V(K_{\nu-(|X|+1)(\delta(G)-|X|+1)-|X|}))$.
For convenience, let $x_p = f$ for all $v_p \in V(K_{|X|})$, $x_p = h$ for all $v_p \in V((|X|+1)K_{\delta(G)-|X|+1})$, and $x_p = m$ for all $v_p \in V(K_{\nu-(|X|+1)(\delta(G)-|X|+1)-|X|})$.
 Together with  $K (H_3)\mathbf{x} = \kappa(H_3)\mathbf{x},$ we have
$$
\begin{aligned}
\kappa(H_3)f=& (\nu + |X| - 2)f + (|X|+1)(\delta(G) - |X|+ 1)h \\
&+ [\nu- |X| - (|X|+1)(\delta(G) - |X|+ 1)]m,\\
\kappa(H_3)h=& |X|f + (2\delta(G)- |X|)h,\\
\kappa(H_3)m=& |X|f + [2\nu - 2(|X|+1)(\delta(G) - |X| + 1)- |X|- 2]m.
\end{aligned}
$$
It is straightforward to check that
$$h =\frac{|X|f}{\kappa(H_3) - (2\delta(G) - |X|)}$$ and
$$m =\frac{|X|f}{\kappa(H_3) - [2\nu - 2(|X|+1)(\delta(G) - |X|+ 1)- |X|- 2]}.$$
Recall that $\nu \geq (\delta(G))^2(\delta(G) + 1)+\delta(G) -1$. Hence, $$\kappa(H_3) \geq 2[\nu- (|X|+1)(\delta(G)-|X| + 1)] > 2(\delta(G) + 1).$$ So we have
$$
\begin{aligned}
\kappa(H_3) + 2=& \nu + |X| + \frac{|X|(|X|+1)(\delta(G) - |X|+ 1)}{\kappa(H_3)- 2\delta(G) + |X|}\\
&+\frac{|X|[\nu - |X|- (|X|+1)(\delta(G)- |X|+ 1)]}{\kappa(H_3) - [2\nu- 2(|X|+1)(\delta(G) - |X|+ 1) - |X|- 2]}\\
<& \nu + |X| + \frac{|X|(|X|+1)(\delta(G) - |X|+ 1)}{|X| + 2}\\
&+ \frac{|X|(\nu - |X| - (|X|+1)(\delta(G) - |X|+ 1))}{|X| + 2}\\
=& \nu + |X| + \frac{|X|(\nu - |X|)}{|X| + 2}\\
=& 2\nu - 2(|X|+1)(\delta(G) - |X|+ 1) \\
&- \frac{2\nu- 2|X| - 2(|X|+1)(|X| + 2)(\delta(G) - |X|+ 1)}{|X| + 2}\\
\leq& 2\nu - 2(|X|+1)(\delta(G) - |X|+ 1)\\
\leq& \kappa(G_3),
\end{aligned}
$$a contradiction.
Thus, 
$$
\begin{aligned}
\kappa(H_3)& < 2[\nu - (|X|+1)(\delta(G) - |X|+ 1)]\\
&= 2[\nu -\delta(G) -2 -(|X|(\delta(G) - |X|) - 1)]\\
&\leq 2(\nu - \delta(G)  - 2).
\end{aligned}
$$
Let $$H_4= K(\delta(G); \nu-2\delta(G)-1 , (\delta(G)+1)\times 1).$$
 Note that $K_{\nu-\delta(G)-1}$ is a proper subgraphs of $H_4$.
According to Lemma \ref{lem4}, we have $\kappa(H_4) > \kappa(K_{\nu-\delta(G)-1}) = 2(\nu - \delta(G) - 2)$. So we obtain $\kappa(H_3) < \kappa(H_4)$. Combining with  $\kappa(G) \leq \kappa(H_1) \leq \kappa(H_3)$, we get $\kappa(G) < \kappa(H_4)$, a contradiction.

{\bf Case 2.} $|X| = \delta(G)$. 

\quad\quad In this case, according to Fact 1 we obtain $\kappa(H_1) \leq \kappa(K(\delta(G); \nu-2\delta(G)-1, (\delta(G)+1)\times 1))$, with equality
if and only if $H_1 \cong K(\delta(G); \nu-2\delta(G)-1, (\delta(G)+1)\times 1)$. Together with  $\kappa(G) \leq \kappa(H_1)$, we get $\kappa(G) \leq
\kappa(K(\delta(G); \nu-2\delta(G)-1, (\delta(G)+1)\times 1))$, with equality if and only if $G \cong K(\delta(G); \nu-2\delta(G)-1, (\delta(G)+1)\times 1)$. Note that
$K(\delta(G); \nu-2\delta(G)-1, (\delta(G)+1)\times 1)$ has no a $\{K_2\}$-factor. Hence, we can obtain a contradiction.

{\bf Case 3.} $|X|\geq \delta(G) + 1$. 

\quad\quad In this case, let $$H_5 = K(|X|; \nu-2|X|-1 , (|X|+1) \times 1).$$ According to Fact 1, we get $\kappa(H_1)\leq \kappa(H_5)$,
with equality if and only if $(\nu_1, \nu_2,\ldots , \nu_{|X|+2}) = (\nu-2|X|-1, 1, \ldots , 1)$. 

\quad\quad Consider the partition $\pi_1$: $V(H_5) = V(K_{|X|}) \cup V(K_{\nu-2|X|-1}) \cup V((|X|+1)K_1)$. One has the quotient matrix of $K(H_5)$ corresponding to
the partition $\pi_1$ as

$$M_{1}=
\left(
  \begin{array}{ccc}
    \nu + |X|-2 & \nu-2|X|-1 & |X|+1 \\
    |X| & 2\nu-3|X|-4 & 0 \\
    |X| & 0 & |X| \\
  \end{array}
\right).
$$
Let $M$ be a square matrix. Then its \emph{characteristic polynomial} is denoted by $\Phi_M(x) = det(x I-M)$, where $I$ is the identity
matrix, whose order is the same as that of $M$.
Then the characteristic polynomial of $M_1$ equals
$$
\begin{aligned}
\Phi_{M_1}(x)=& x^3 + (|X|-3\nu +6)x^2 - [4|X|^2 - (\nu -4)|X|  - 2(\nu^2 - 4\nu+ 4]x\\
&-2|X|^3+ 2(2\nu  -5)|X|^2 - 2(\nu^2- 5\nu +6)|X|.
 \end{aligned}
$$
Since the partition $\pi_1$: $V(H_5) =V(K_{|X|}) \cup V(K_{\nu-2|X|-1}) \cup V((|X|+1)K_1)$ is equitable, by Lemma \ref{lem5} the largest root, say $\kappa(H_5)$, of $\Phi_{M_1}(x) = 0$.

\quad\quad Recall that $H_4= K(\delta(G); \nu-2\delta(G)-1 , (\delta(G)+1)\times 1)$. Consider the partition $\pi_2$: $V(H_4) = V(K_{\delta(G)}) \cup V(K_{\nu-2\delta(G)-1}) \cup V((\delta(G)+1) K_1)$. Then
the quotient matrix of $K (H_4)$ corresponding to the partition $\pi_2$ is given as
$$M_2=
\left(
  \begin{array}{ccc}
    \nu + \delta(G)-2 & \nu-2\delta(G)-1 & \delta(G)+1 \\
    \delta(G) & 2\nu-3\delta(G)-4 & 0 \\
    \delta(G) & 0 & \delta(G) \\
  \end{array}
\right).
$$
Then the characteristic polynomial of $M_2$ equals
$$
\begin{aligned}
\Phi_{M_2}(x)=& x^3 - (3\nu - \delta(G) - 6)x^2 + [2\nu^2 + (2\nu + \delta(G)- 8)\nu - 4(\delta^2  + \delta- 2)]x\\
&- 2[(\delta(G) \nu^2- ( 2(\delta(G))^2 + 5\delta(G))\nu   + (\delta(G))^3+5(\delta(G))^2+ 6\delta(G)].\\
\end{aligned}
$$
Since the partition $\pi_2$: $V(H_4) = V(K_{\delta(G)}) \cup V(K_{\nu-2\delta(G)-1}) \cup V((\delta(G)+1) K_1)$ is equitable, by Lemma \ref{lem5}, the largest
root, say $\kappa(H_4)$, of $\Phi_{M_2}(x)= 0$.

\quad\quad According to Lemma \ref{lem4}, we have $\kappa(H_4) > \kappa(K_{\nu-\delta(G)-1}) = 2(\nu- \delta(G) -2)$. 
In what follows, we are to show $\kappa(H_5) < \kappa(H_4)$. So it suffices to prove
$\Phi_{M_1}(x)-\Phi_{M_2}(x) > 0$ for $x \geq 2(\nu - \delta(G)-2)$. By a simple calculation, one has
$$\Phi_{M_1}(x)-\Phi_{M_2}(x) = (|X|- \delta(G))g_1(x).$$Let 
$$
\begin{aligned}
g_1(x) =& x^2+(\nu-4\delta(G)-4|X|-4)x-2|X|^2\\
&+2(2\nu-\delta(G)-5)|X|-2[\nu^2-(2\delta(G)+5)\nu+(\delta(G))^2+5\delta(G)+6]
\end{aligned}
$$
be a real function in $x$ for $x \in [2(\nu - \delta(G)-2), +\infty)$. It is routine to check that the derivative function of $g_1(x$) is
$$g_1'(x) = 2x+\nu- 4\delta(G)-4|X| -4.$$
Thus,
the unique solution of $g_1'(x) = 0$ is $\frac{4\delta(G)+4|X|+4-\nu}{2}$.  Bearing in mind that $\nu \geq \frac{19}{3} \delta(G)+3$ and $\nu \geq 2(|X| + 1)$. It is straightforward
to check that
$$
\begin{aligned}
2(\nu-\delta(G) - 2) -\frac{4\delta(G)+4|X|+4-\nu }{2}&= -2|X| + \frac{5\nu}{2} -4\delta(G) - 6\\
&\geq\frac{3\nu}{2} -4\delta(G) - 4\\
&\geq \frac{11\delta(G)+ 1 }{2}\\
&> 0.
\end{aligned}
$$
Hence, $g_1(x)$ is a increasing  function in the interval $[2(\nu -\delta(G) -2), +\infty)$. So we have
$$
\begin{aligned}
g_1(x)\geq& g_1(2(\nu -\delta(G)  -2))\\
=& -2|X|^2 -2(2\nu  - 3\delta(G) - 3)|X| + 4\nu^2 -2(7\delta(G) +9)\nu  \\
&+ 10[(\delta(G))^2 + 3\delta(G)+2] .
\end{aligned}
$$
Recall that $\nu \geq 2(|X| + 1).$ We obtain $|X| \leq \frac{\nu-2}{2}.$
Let $$g_2(x) = -2x^2 -2(2\nu  - 3\delta(G) - 3)x + 4\nu^2 -2(7\delta(G) +9)\nu + 10[(\delta(G))^2 + 3\delta(G)+2] $$ be a real function in $x$ for 
$x \in [\delta(G) + 1, \frac{\nu-2}{2}]$. In fact, the derivative function of $g_2(x)$ is
$$g_2'(x) = -4x -2(2\nu  - 3\delta(G) - 3).$$
Thus, $\frac{-2\nu+3\delta(G)+3}{2}$
is the unique solution of $g_{2}'(x) = 0$. Together with  $\nu \geq \frac{19}{3}\delta(G)+3$ and $\delta(G) \geq 19$, we get
$$
\begin{aligned}
\frac{-2\nu + 3\delta(G) + 3}{2}- (\delta(G) + 1) &= \frac{-2\nu+1 + \delta(G) }{2}\\
&\leq -\frac{35\delta(G)}{6} -\frac{5}{2}\\
&< 0.
\end{aligned}
$$
Hence, $g_2(x)$ is a decreasing function in the interval $[\delta(G) + 1, \frac{\nu-2}{2}]$. Therefore, we obtain
$$
\begin{aligned}
g_2(|X|) &\geq g_2(\frac{\nu-2}{2})\\
&= \frac{3\nu^2}{2}- (11\delta(G)+9) \nu  + 2[5(\delta(G))^2 + 12\delta(G) +6].
\end{aligned}
$$
Let $$g_{3}(x)=\frac{3x^2}{2}- (11\delta(G)+9) x  + 2[5(\delta(G))^2 + 12\delta(G) +6]$$ be a real function in $x$ for $x \in [\frac{19\delta(G)}{3}+3, +\infty)$.
It is routine to check that the derivative function of  $g_3(x)$ is
$$g_3'(x) = 3x  -11\delta(G)-9.$$

Hence the unique solution of $g_3'(x) = 0$ is $\frac{11\delta(G)+9}{3}$. As
$$\frac{19\delta(G)+9}{3}> \frac{11\delta(G)+9 }{3},$$
one obtains that $g_3(x)$ is increasing for $x \in[\frac{19\delta(G)+9}{3}, +\infty)$. 
Hence, we get
$$g_3(\nu)\geq g_3(\frac{19\delta(G)+9}{3}) = \frac{1}{2}[(\delta(G))^2 - 18\delta(G)- 3].$$
Let $$g_4(x) = \frac{1}{2}(x^2 - 18x- 3)$$ be a real function in $x$ for $x \in [19, +\infty)$. In fact, the derivative function of $g_4(x)$ is
$$g_4'(x) = x -9.$$
Observe that  $g_4'(x) \geq g_4'(19) = 10 > 0$ in the interval $[19, +\infty)$. Clearly, $g_4(x)$ is increasing for $x \in [19, +\infty)$. Therefore, 
$g_4(\delta(G))\geq g_4(19) = 8 > 0$. 
Hence, when $x\geq 2(\nu -\delta(G) -2)$, we have $\Phi_{M_1}(x)-\Phi_{M_2}(x) >0$. Thus, we get
$\kappa(H_5) < \kappa(H_4)$.  Together with $\kappa(G) \leq \kappa(H_1) \leq \kappa(H_5)$, we obtain $\kappa(G) < \kappa(H_4)$, a contradiction.

\quad\quad This completes the proof of Theorem \ref{theorem3}.

\end{proof}

	\item {\bf Distance spectral radius}

We give the proof of Theorem \ref{theorem7}, which gives a sufficient
condition via the distance spectral radius of a connected graph to ensure that the graph contains a
$\{K_2\}$-factor.

\quad\quad Now we give a proof of Theorem \ref{theorem7}.

\begin{proof}
  We prove it by contradiction. $G$ has no a $\{K_2\}$-factor. By Lemma \ref{lem3},  there
exists some nonempty subset  $X\subseteq V(G)$ satisfying $$\operatorname{o}(G-X)\geq |X| + 2.$$

 Then $G$ is a spanning subgraph of $$H_1 = K(|X|; \nu_1,\nu_2, \cdots, \nu_{|X|+2})$$ for some positive odd integers
$\nu_1\geq\nu_2 \geq \cdots \geq \nu_{|X|+2}$ with $|X|+\sum\limits_{i=1}^{|X|+2}\nu_i = \nu$. That is to say,  $\mu_1(G) \geq \mu_1(H_1)$, where the equality holds if and only if $G\cong H_1$. 

\quad\quad In order to characterize the structure of $G$, we need the following fact.

{\bf Fact 2.}  Let $\nu = |X| + \sum\limits_{i=1}^{s}\nu_i$. If $\nu_1 < \nu-|X|-r(s-1)$ and $\nu_1 \geq \nu_2 \geq \cdots \nu_s \geq r \geq 1$, then 
$$\mu_1(K(|X|; \nu_1, \nu_2, \cdots, \nu_s )) >\mu_1(K(|X|; \nu-|X|-r(s-1), (s-1)\times r)).$$

\begin{proof}
Let $G = K(|X|; \nu_1 \nu_2, \cdots , \nu_s )$ and $$H_2 =K(|X|; \nu-|X|-r(s-1), (s-1)\times r).$$ Consider the partition: 
$V(G) = V(K_{|X|}) \cup V(K_{\nu_1}) \cup V(K_{\nu_2}) \cup \cdots \cup V(K_{\nu_s} )$, where $V(K_{|X|}) = \{u_1, u_2,\ldots , u_{|X|}\}$, and 
$V(K_{\nu_j}) = \{w_{j_1}, w_{j_2}, \ldots , w_{j_{\nu_i}}\}$
for $1\leq j \leq s.$

Let 
$$
\begin{aligned}E_1=& \{w_{j_l}w_{j_q}|2 \leq j \leq s,1 \leq l \leq \nu_j-r,  \nu_j-r + 1  \leq q \leq \nu_j \},
\end{aligned}
$$
$$E_2=\{w_{1_l}w_{j_i}| 1 \leq l \leq \nu_1, 2 \leq j \leq s, 1\leq i \leq \nu_j-r\}$$ and
 $$E_3 = \{w_{j_i}w_{l_q}|2 \leq j \leq s-1, j+1 \leq l \leq s; 1\leq i \leq \nu_j-r, 1 \leq q \leq \nu_l-r\}.$$ 
Obviously,  $G + E_1- E_2- E_3  \cong H_2$.

\quad\quad Suppose $\mathbf{x} = (x_1, \ldots , x_\nu)^T$ is
the Perron vector of $D (H_2)$, and let $x_i$ denote the entry of $\mathbf{x}$ corresponding to the vertex $v_i \in V(H_2)$.
By Lemma \ref{lem41}, one has $x_a = x_b$ for all $v_a, v_b$ in $V (K_{|X|})$ (resp. $V((s -1)K_r)$ and $V(K_{\nu-|X|-r(s-1)})$). 
 For convenience, let $x_i = z_1$ for all $v_i \in V(K_{|X|})$, $x_i =z_2 $ for all $v_i \in V((s -1)K_r)$, and $x_i = z_3$ for all $v_i \in V(K_{\nu-|X|-r(s-1)})$.
Together with $D (H_2)\mathbf{x} = \mu_1(H_2)\mathbf{x},$ we obtain
$$\mu_1(H_2)z_2 = |X|z_1 + (2r s- 3r -1)z_2 + 2[\nu-|X| -r(s- 1)]z_3,$$
$$\mu_1(H_2)z_3 = |X|z_1 + 2r(s - 1)z_2 + [\nu - |X| -1 - r(s- 1)]z_3.$$
Thus, $$(\mu_1(H_2) + r + 1)z_2 = (\mu_1(H_2) + \nu- |X| -rs + r + 1)z_3.$$ So we get $$z_2 = (1 +\frac{\nu-|X|-rs}{\mu_1(H_2)+r+1})z_3.$$ By the
Rayleigh quotient, we obtain
$$
\begin{aligned}
\mu_1(G) -\mu_1(H_2)\geq& \mathbf{x}^T( D (G)- D (H_2))\mathbf{x}\\
=&  \nu_1\sum_{i=2}^{s}(\nu_i-r)z_3^2 + (\nu_2 - r)[(\nu- |X|- \nu_2 - (s - 2)r)z_3^2- 2rz_2z_3] \\
&+ \cdots
+ (\nu_s- r)[(\nu- |X| - \nu_s -(s- 2)r)z_3^2- 2rz_2z_3]\\
=& (\nu_2 -r)[(\nu + \nu_1- |X| -\nu_2- (s- 2)r)z_3^2- 2rz_2z_3] + \cdots\\
&+ (\nu_s - r)[(\nu + \nu_1 - |X|- \nu_s -(s- 2)r)z_3^2-2rz_2z_3].\\
\end{aligned}
$$
In what follows, we are to show $\mu_1(G) > \mu_1(H_2)$. Then it suffices to prove $$ (\nu + \nu_1-|X|- \nu_2 - (s -2)r)z_3^2-2rz_2z_3 > 0.$$ Observe that
$K_{\nu-r(s-1)}$ is a subgraph of $H_2$. According to Lemma \ref{lem6}, we have
$$\mu_1(H_2) > \mu_1(K_{\nu-r(s-1)}) = \nu - 1 - r(s - 1).$$
Hence,
$$
\begin{aligned}
&(\nu + \nu_1-|X|- \nu_2 - (s -2)r)z_3^2-2rz_2z_3\\
&= [\nu_1- \nu_2 + (\nu-|X| -rs)-\frac{2r(\nu-|X| - rs)}{\mu_1(H_2) + r + 1}]z_3^2\\
&=\frac{z_3^2}{\mu_1(H_2) + r + 1}
[(\nu_1-\nu_2)(\mu_1(H_2) + r + 1) + (\nu -|X|
-rs)(\mu_1(H_2)- r + 1)]\\
&>\frac{z_3^2}{\mu_1(H_2) + r + 1}[(\nu_1-\nu_2)(\mu_1(H_2) + r + 1) + (\nu-|X| - rs)(\nu- rs)]\\
&> 0.
\end{aligned}
$$
Therefore, we get $\mu_1(G) > \mu_1(H_2)$. 
 This completes the proof of Fact 2.

\end{proof}

We proceed by the following three possible cases.

{\bf Case 1.}  $|X| < \delta(G).$ 

In this case, let $$H_3 = K(|X|; (\nu-(\delta(G)-|X|+1)(|X|+1)-|X|, (|X|+1)\times (\delta(G)-|X|+1)).$$ Recall that $G$ is a spanning subgraph
of $H_1 = K(|X|; \nu_1, \nu_2, \cdots, \nu_{|X|+2})$, where $\nu_1 \geq \nu_2 \geq \cdots \geq \nu_{|X|+2}$ and $\sum\limits_{i=1}^{|X|+2}\nu_i = \nu-|X|$. Clearly, 
Note that $\delta(H_1) \geq \delta(G)$, one obtains that
$\nu_{|X|+2} \geq \delta(G) + 1-|X|$. Together with Fact 2, we get $\mu_1(H_1)\geq \mu_1(H_3)$, with equality if and only
if $(\nu_1, \nu_2, \ldots , \nu_{|X|+2}) = (\nu -(|X|+1)(\delta(G)-|X| + 1)-|X|,\delta(G) -|X|+ 1, \ldots , \delta(G) -|X|+ 1)$.

\quad\quad In what follows, we consider $|X| \geq 1$.
Consider the partition $\pi_3$: $V(H_3) = V(K_{|X|}) \cup V(K_{\nu-(|X|+1)(\delta(G)-|X|+1))-|X|} \cup V((|X| + 1)K_{\delta(G)-|X|+1})$. 
Then the quotient matrix of
$D(H_3)$ corresponding to the partition $\pi_1$ equals
 $$M_3=
\left(
  \begin{smallmatrix}{}
   |X|-1 &&&&&&&& \nu- (|X| + 1)(\delta(G)-|X|+ 1)-|X| &&&&&&&& (|X| + 1)(\delta(G) -|X+ 1|) \\
   |X| &&&&&&&& \nu- (|X| + 1)(\delta(G)-|X|+ 1)-|X|-1 &&&&&&&& 2(|X|+ 1)(\delta(G) -|X|+ 1) \\
   |X| &&&&&&&& 2(\nu- (|X| + 1)(\delta(G)-|X|+ 1)-|X|) &&&&&&&& \delta(G) +2|X|(\delta(G) -|X|+ 1)-|X| \\
  \end{smallmatrix}
\right).
$$
Its characteristic polynomial is 
$$
\begin{aligned}
\Phi_{M_3}(x)=& x^3 + [|X|^2 - ( \delta(G) +1)|X| -\nu  + 3]x^2
+ [2|X|^4 - (4\delta(G) + 2)|X|^3\\
&+(2\nu 
 + 2(\delta(G))^2-3\delta(G) -3)|X|^2 - [( 2\delta(G) - 1)\nu- 5(\delta(G))^2 -5\delta(G)]|X|\\
 &  -(3\delta(G)+5)\nu  + 3(\delta(G))^2
+ 6\delta(G) + 6]x- |X|^5 + ( 2\delta(G) + 3)|X|^4 \\
&-(\nu +(\delta(G))^2+ 3\delta(G))|X|^3
+ [(  2 + \delta(G))\nu - 6\delta(G) - 5]|X|^2\\ 
&- [(\delta(G) - 2)\nu
- 4(\delta(G))^2 - 4\delta(G)]|X|  \\
&-( 3\delta(G)+4)\nu
 + 3(\delta(G))^2 + 6\delta(G) + 4.
 \end{aligned}
$$
Since the partition $\pi_3$: $ V(H_3) = V(K_{|X|})\cup V(K_{\nu-(|X|+1)(\delta(G)-|X|+1)-|X|}) \cup V((|X|  + 1)K_{\delta(G)-|X|+1})$ is equitable, by Lemma \ref{lem5}, the largest root, say  $\mu_1(H_3)$,  of $\Phi_{M_3}(x)= 0$.

Let $$H_4 = K(\delta(G); \nu-2\delta(G)-1, (\delta(G)+1)\times 1).$$ Consider the partition $\pi_2$:  $V(H_4) = V(K_{\delta(G)})\cup V(K_{\nu-2\delta(G)-1})\cup V((\delta(G)+1)K_1)$.
Then the quotient matrix of $D(H_4)$
corresponding to the partition $\pi_2$ equals
$$M_4=
\left(
  \begin{array}{ccc}
    \delta(G)-1 & \nu-2\delta(G)- 1 & \delta(G)  + 1 \\
   \delta(G) & \nu- 2\delta(G)- 2 &  2(\delta(G)  + 1)\\
   \delta(G)  & 2(\nu-2\delta(G) -1) &  2\delta(G) \\
  \end{array}
\right).
$$
Thus the characteristic polynomial of $M_4$ is
$$
\begin{aligned}
\Phi_{M_4}(x)=& x^3 + (3 -\delta(G) -\nu)x^2 + [-( 2\delta(G)+5)\nu   + 5(\delta(G))^2 + 6\delta(G) + 6]x\\
&+ [  (\delta(G))^2- \delta(G) -4]\nu -2(\delta(G))^3 + 2(\delta(G))^2 + 6\delta(G) + 4.
 \end{aligned}
$$
Since the partition $\pi_2$:  $V(H_4) = V(K_{\delta(G)})\cup V(K_{\nu-2\delta(G)-1}) \cup V((\delta(G)+ 1)K_1)$ is equitable, by Lemma \ref{lem5}, the largest root, say $\mu_1(H_4)$, of $\Phi_{M_4}(x)= 0$. Observe that $\mu_1(H_4)> \mu_1(K_{\nu-\delta(G)-1}) = \nu-\delta(G)- 2.$

\quad\quad In what follows, we are to show  $\mu_1(H_3) > \mu_1(H_4)$. Then it suffices to prove $\Phi_{M_3}(\mu_1(H_4))< 0$. Together with $\Phi_{M_4}(\mu_1(H_4)) = 0$, we
obtain
$$
\begin{aligned}
\Phi_{M_3}(\mu_1(H_4))=& \Phi_{M_3}(\mu_1(H_4))-\Phi_{M_4}(\mu_1(H_4))\\
=& -(\delta(G)-|X|)[(|X|-1)(\mu_1(H_4))^2 + (2|X|^3  -( 2\delta(G) + 2)|X|^2 \\
&+ (2\nu 
 - 5\delta(G)- 3)|X| + \nu   + 2\delta(G))\mu_1(H_4)\\
 &- |X|^4+ ( \delta(G)+ 3)|X|^3-\nu  |X|^2 + ( 2\nu   
-6\delta(G)- 5)|X| \\
&+ ( \delta(G) + 2)\nu - 2(\delta(G))^2 - \delta(G)].
 \end{aligned}
$$
Let 
$$
\begin{aligned}g_5(x) =& (|X|-1)x^2+(2|X|^3-(2\delta(G)+2)|X|^2+(2\nu-5\delta(G)-3)|X|+\nu+2\delta(G))x\\
&-|X|^4+(\delta(G)+3)|X|^3-\nu|X|^2+(2\nu-6\delta(G)-5)|X|+(\delta(G)+2)\nu\\
&-2(\delta(G))^2-\delta(G)
 \end{aligned}
$$be a real function in $x$ for $x \in(\nu - \delta(G)-2, +\infty).$  It is routine to check that the derivative function of $g_5(x)$ is
$$g_5'(x) = 2(|X|-1)x + 2|X|^3 - (2\delta(G)+2)|X|^2 + (2\nu-5\delta(G)-3)|X| + \nu  + 2\delta(G).$$
Bearing in mind that  $\nu\geq \frac{2}{3}(\delta(G))^3$ and $1 \leq |X| \leq\delta(G)-1$. It is straightforward to check that
$$
\begin{aligned}
g_5'(x)&> g_5'(\nu-\delta(G) - 2)\\
&= 2|X|^3 -( 2\delta(G) + 2)|X|^2 + (4\nu -7\delta(G) -7)|X| -\nu
  + 4\delta(G) + 4\\
&\geq2 - ( 2\delta(G) + 2)(\delta(G) -1)^2 + (4\nu  -7\delta(G) - 7)
-\nu   + 4\delta(G) + 4\\
&= 3\nu -\delta(G)   + 2(\delta(G))^2 -2(\delta(G))^3- 3\\
&\geq 2(\delta(G))^2 -\delta(G) - 3\\
&> 0.
\end{aligned}
$$
Hence, $g_5(x)$ is a increasing function in the interval $(\nu-\delta(G)-2, +\infty)$. So we have
$$
\begin{aligned}
g_5(\mu_1(H_4))>& g_5(\nu- \delta(G)-2)\\
=&-|X|^4 + (2\nu-\delta(G) -1)|X|^3 + [-(2\delta(G) +3)\nu  + 2(\delta(G))^2 \\
&+ 6\delta(G) + 4]|X|^2+ [3\nu^2 - ( 9\delta(G)+9)\nu + 6(\delta(G))^2 + 11\delta(G) \\
&+ 5]|X|
+ (  4\delta(G) + 4)\nu- 5(\delta(G))^2 - 9\delta(G)- 4\\
\geq& -(\delta(G) -1)^4 + (2\nu -\delta(G) - 1) + [-(2\delta(G)+ 3)\nu + 2(\delta(G))^2 \\
&+ 6\delta(G)
+ 4](\delta(G)-1)^2 + [3\nu^2 - (9\delta(G) + 9)\nu  + 6(\delta(G))^2 \\
&+ 11\delta(G) + 5]
+ (   4\delta(G)+ 4)\nu- 5(\delta(G))^2-9\delta(G) -4\\
=& 3\nu^2  -[2(\delta(G))^3 - (\delta(G))^2 + \delta(G)+6]\nu \\
& + (\delta(G))^4 + 6(\delta(G))^3 - 11(\delta(G))^2+ 3\delta(G) + 3.
\end{aligned}
$$
Let $$g_6(x) = 3x^2 -[2(\delta(G))^3 -(\delta(G))^2 +\delta(G) + 6]x  + (\delta(G))^4 + 6(\delta(G))^3-11(\delta(G))^2 + 3\delta(G) + 3$$ be a real function in $x$ for $x \in [\frac{2}{3}
(\delta(G))^3, +\infty)$. In fact, the derivative function of $g_6(x)$
is $$g_6'(x) = 6x -2(\delta(G))^3 + (\delta(G))^2 -\delta(G)-6.$$
Thus,  $\frac{1}{6}[2(\delta(G))^3-(\delta(G))^2 + \delta(G) + 6]$ is the unique solution of 
$g_6'(x) = 0$. Obviously,
$$\frac{2}{3}\delta^3(G) >\frac{1}{6}(  2(\delta(G))^3-(\delta(G))^2 + \delta(G) + 6).$$
Hence, $g_6(x)$ is a increasing function in the interval
 $ [\frac{2}{3}(\delta(G))^3 , +\infty)$. Therefore, we
obtain
$$
\begin{aligned}
g_6(\nu)&\geq g_6(\frac{2}{3}(\delta(G))^3)\\
&= \frac{2}{3}(\delta(G))^5 + \frac{1}{3}
(\delta(G))^4 +    2(\delta(G))^3   -11(\delta(G))^2
+3\delta(G)   + 3\\
&\geq \frac{2}{3}
(\delta(G))^5+  \frac{1}{3}(\delta(G))^4 +2(\delta(G))^3 - 11(\delta(G))^2-5\delta(G)   + 3\\
&> 0.
\end{aligned}
$$
Thus,  $g_5(\mu_1(H_4)) > 0$ and $\Phi_{M_3}(\mu_1(H_4)) < 0$. Therefore, $\mu_1(H_3) > \mu_1(H_4)$. Together with $\mu_1(G) \geq \mu_1(H_1) \geq \mu_1(H_3)$, we
get $\mu_1(G) > \mu_1(H_4)$, a contradiction to the condition.

{\bf Case 2.}  $|X| = \delta(G)$. 

\quad\quad In this case, according to  Fact 2 we get $\mu_1(H_1) \geq \mu_1(K(\delta(G); \nu-2\delta(G)-1, (\delta(G)+1)\times 1))$, with equality
if and only if $H_1 \cong K(\delta(G); \nu-2\delta(G)-1, (\delta(G)+1)\times 1)$. Together with  $\mu_1(G) \geq \mu_1(H_1)$, we obtain $\mu_1(G) \geq
\mu_1(K(\delta(G); \nu-2\delta(G)-1, (\delta(G)+1)\times 1))$, with equality if and only if $G \cong K(\delta(G); \nu-2\delta(G)-1, (\delta(G)+1)\times 1)$. Note that
$K(\delta(G); \nu-2\delta(G)-1, (\delta(G)+1)\times 1)$ has no a $\{K_2\}$-factor, a
contradiction.

{\bf Case 3.}  $|X|\geq \delta(G) + 1$. 

\quad\quad In this case, let $$H_5 = K(|X|; \nu-2|X|-1, (|X|+1)\times 1).$$ According to  Fact 2, we obtain $\mu_1(H_1)\geq\mu_2(H_5)$,
with equality if and only if $(\nu_1, \nu_2, \ldots ,\\ \nu_{|X|+2}) = (\nu-2|X|-1, 1, \ldots , 1)$. 

Consider the partition $\pi_4$: $V(H_5) = V(K_{\nu-2|X|-1}) \cup V(K_{|X|})\cup V((|X|+1)K_1)$. 
One has the quotient matrix of $D(H_5)$ corresponding to the partition $\pi_4$ as
$$M_5=
\left(
  \begin{array}{ccc}
    \nu-2|X| -2 & |X| & 2(|X|+1) \\
    \nu-2|X|-1 & |X|-1 & |X|+1 \\
    2(\nu-2|X|-1) & |X| & 2|X| \\
  \end{array}
\right).
$$
Then the characteristic polynomial of $M_5$ equals
$$
\begin{aligned}
\Phi_{M_5}(x)=& x^3 - (\nu+|X|-3)x^2 + [5|X|^2 -( 2\nu-6)|X| -5\nu + 6]x\\
&-2|X|^3 + (\nu + 2)|X|^2 - (\nu  - 6)|X| -4\nu + 4.
 \end{aligned}
$$
Since the partition $\pi_4$: $V(H_5) = V(K_{\nu-2|X|-1})\cup V(K_{|X|}) \cup V((|X|+1)K_1)$ is equitable, by Lemma \ref{lem5}, the largest root, say $\mu_1(H_5)$), of $\Phi_{M_5}(x) = 0$.

\quad\quad Recall that $H_4= K(\delta(G); \nu-2\delta(G)-1, (\delta(G)+1)\times 1)$. Consider the partition $\pi_5$: $V(H_4) = V(K_{\nu-2\delta(G)-1}) \cup V(K_{\delta(G)}) \cup V((\delta(G)+1) K_1)$.
Then the quotient matrix of $D(H_4)$ corresponding to the partition $\pi_5$ is given as
$$M_6=
\left(
  \begin{array}{ccc}
    \nu - 2\delta(G)-2 & \delta(G) & 2(\delta(G)+1) \\
    \nu-2\delta(G)-1 & \delta(G)-1 & \delta(G)+1 \\
    2(\nu-2\delta(G)-1) & \delta(G) & 2\delta(G) \\
  \end{array}
\right).
$$
Then the characteristic polynomial of $M_6$ equals
$$
\begin{aligned}
\Phi_{M_6}(x)= & x^3 - (\nu+\delta(G)-3)x^2 - [(2\delta(G)+5)\nu  - 5(\delta(G))^2-6\delta(G) - 6]x\\
&+ ( (\delta(G))^2 - \delta(G)-4)\nu - 2(\delta(G))^3 + 2(\delta(G))^2 + 6\delta(G) + 4.
\end{aligned}
$$
Since the partition $\pi_5$: $V(H_4) = V(K_{\nu-2\delta(G)-1}) \cup V(K_{\delta(G)}) \cup V((\delta(G)+1) K_1)$ is equitable. By Lemma \ref{lem5}, the largest
root, say $\mu_1(H_4)$,  of $\Phi_{M_6}(x)= 0$.

It is obvious that $\mu_1(H_4)> \mu_1(K_{\nu-\delta(G)-1}) = \nu- \delta(G)-2$. 
In what follows, we are to show $\mu_1(H_5) > \mu_1(H_4)$. So it suffices to prove 
$\Phi_{M_5}(\mu_1(H_4)) < 0$. 
Observe that $\Phi_{M_6}(\mu_1(H_4))= 0$. Thus,
$$
\begin{aligned}
\Phi_{M_5}(\mu_1(H_4))=&\Phi_{M_5}(\mu_1(H_4))-\Phi_{M_6}(\mu_1(H_4))\\
=& (\delta(G)-|X|)[(\mu_1(H_4))^2 + (2\nu-5\delta(G)-5|X|-6)\mu_1(H_4) + 2|X|^2 \\
&- (\nu- 2\delta(G)+ 2)|X|- (\delta(G) - 1)\nu  + 2(\delta(G))^2- 2\delta(G)- 6].
\end{aligned}
$$
Let $g_7(x) = x^2 + (2\nu- 5\delta(G)- 5|X|-6)x + 2|X|^2 - (\nu - 2\delta(G)+2)|X| - ( \delta(G) - 1)\nu  + 2(\delta(G))^2 - 2\delta(G)-6$ be a real function in $x$ for $x\in(\nu-\delta(G)  -2, +\infty)$. It is routine to check that the
derivative function of $g_7(x)$ is
$$g_7'(x) = 2x + 2\nu  -5\delta(G)-5|X|-6.$$
Therefore, $\frac{1}{2}(5\delta(G) +5|X| + 6- 2\nu)$ is the unique solution of $g_7'(x) = 0$. Combining with $\nu \geq 12\delta(G)+ 5$ and $\nu \geq 2|X| + 2$,
we obtain
$$
\begin{aligned}
(\nu -\delta(G) - 2)-\frac{1}{2}(5\delta(G) +5|X| + 6-2\nu) &= \frac{1}{2}(4\nu -7\delta(G)-5|X|-10)\\
&\geq \frac{1}{4}(3\nu  - 14\delta(G) - 10)\\
&\geq\frac{1}{4}(22\delta(G) + 5)\\
&> 0.
\end{aligned}
$$
Thus, $g_7(x)$  is a increasing function in the interval $(\nu-\delta(G)  -2, +\infty)$. Hence, we have $g_7(\mu_1(H_4)) > g_7(\nu- \delta(G)  -2) =
2|X|^2 + (8 + 7\delta(G)-6\nu)|X| + 3\nu^2  -(10\delta(G)+13)\nu  + 8(\delta(G))^2 + 18\delta(G) + 10.$ 
Let $$g_8(x) =
2x^2 + (8 + 7\delta(G) - 6\nu)x + 3\nu^2  - ( 10\delta(G) +13)\nu + 8(\delta(G))^2 + 18\delta(G) + 10$$ be a real function in
$x$ for $x \in [\delta(G) + 1,\frac{\nu-2}{2}]$. We may obtain the derivative function of $g_8(x)$ as
$$g_8'(x) = 4x -6\nu + 7\delta(G) + 8.$$
Therefore, $\frac{1}{4}(6\nu - 7\delta(G) -8)$ is the unique solution of $g_8'(x) = 0.$ Obviously,
$$\frac{6\nu -7\delta(G) -8}{4}>\frac{ \nu - 2}{2}.$$
Then $g_8(x)$ is decreasing in the interval $ [\delta(G) + 1, \frac{\nu-2}{2}]$. Thus,
$g_8(|X|) \geq g_8( \frac{\nu-2}{2}) =
\frac{1}{2}\nu^2 - (\frac{13\delta(G)}{2}+5)\nu + 8(\delta(G)) + 11\delta(G) + 4$. Let $$g_9(x) = \frac{1}{2}
x^2 - ( \frac{13\delta(G)}{2}+5)x   + 8(\delta(G))^2 + 11\delta(G) + 4$$ be a real function
in $x$ for $x \in [12\delta(G)  + 5, +\infty)$. In fact, the derivative function of $g_9(x)$
is
$$g_9'(x) = x -\frac{13\delta(G)}{2}- 5.$$
Thus, $\frac{13\delta(G)}{2} + 5$ is the unique solution of $g_9'(x) = 0$. Obviously,
$$\frac{13\delta(G)}{2} + 5 < 12\delta(G) + 5.$$
Therefore, $g_9(x)$ is a increasing function in the interval $[12\delta(G)  + 5, +\infty)$. Hence, we
get
$$g_9(\nu) \geq g_9(12\delta(G)  + 5) = 2(\delta(G))^2 -\frac{43}{2}\delta(G)  - \frac{17}{2}.$$

Let $$g_{10}(x) = 2x^2  - \frac{43}{2}x- \frac{17}{2}$$
be a real function in $x$ for $x \in [12, +\infty)$. It is routine to check that the derivative
function of $g_{10}(x)$ is
$$g_{10}'(x) = 4x  - \frac{43}{2}.$$
Thus, $\frac{ 43}{8}$ is the unique solution of $g_{10}'(x) = 0$. As
$$\frac{ 43}{8} < 12.$$
one obtains that $g_{10}(x)$ is increasing for $x \in[12, +\infty)$. Therefore, we have
$$g_{10}(x) \geq g_{10}(12) = \frac{43}{2}> 0.$$
Hence, $g_7(\mu_1(H_4)) > 0$ and $\Phi_{M_5}(\mu_1(H_4)) < 0$. So we get $\mu_1(H_5) > \mu_1(H_4)$. Together with $\mu_1(G) \geq \mu_1(H_1) \geq \mu_1(H_5)$, we
obtain $\mu_1(G) > \mu_1(H_4)$, a
contradiction.

\quad\quad This completes the proof of Theorem\ref{theorem7}.

\end{proof}

\end{enumerate}

\section{A $\{K_{1,1}, K_{1,2}, C_{m}: m\geq3\}$-factor}
\begin{enumerate}
	\item {\bf Size}

We give the proof of Theorem \ref{theorem11}, which gives a sufficient condition via the size of a connected graph $G$ to ensure that $G$ is a $\{K_{1,1}, K_{1,2}, C_{m}: m\geq3\}$-factor.
 We denote by $I(G)$ the set of isolated vertices of $G$, and let $\operatorname{iso}(G) = |I(G)|$.  $G$ is called \emph{trivial} if $|V(G)|=1$. 
In 1980, Akiyama J. and Era H. \cite{JH1980} gave a sufficient and necessary condition for the existence of a $\{K_{1,1}, K_{1,2},C_m: m\geq3\}$-factor.
\begin{lem}{\upshape (\cite{JH1980})}\label{lem333}
A graph $G$ has a $\{K_{1,1}, K_{1,2},C_m: m\geq3\}$-factor if and only if
$$\operatorname{iso}(G-X)\leq2|X|$$ for all  $X\subseteq V(G)$.
\end{lem}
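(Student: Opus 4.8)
The plan is to establish the two implications separately: the forward direction is a direct counting argument, and essentially all the work lies in the converse.

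\emph{Necessity.} Suppose $G$ admits a $\{K_{1,1},K_{1,2},C_m:m\ge 3\}$-factor $F$ and fix $X\subseteq V(G)$. Since $F$ is a spanning subgraph of $G$, a vertex isolated in $G-X$ is isolated in $F-X$, so $\operatorname{iso}(G-X)\le\operatorname{iso}(F-X)$. I would then bound $\operatorname{iso}(F-X)$ by summing over the components $C$ of $F$: writing $X_C=X\cap V(C)$, the contribution of $C$ to $\operatorname{iso}(F-X)$ is at most $2|X_C|$. Indeed, if $C\cong K_{1,1}$ the contribution is at most $|X_C|$; if $C\cong K_{1,2}$ it is at most $2$ and is nonzero only when the centre lies in $X_C$, so $|X_C|\ge 1$; and if $C\cong C_m$, deleting $k=|X_C|$ vertices from a cycle leaves at most $k$ isolated vertices, because an isolated vertex must have both of its cycle-neighbours deleted and there are at most $k$ such places. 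Summing over all components gives $\operatorname{iso}(G-X)\le\sum_C 2|X_C|=2|X|$.

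\emph{Sufficiency, setup.} For the converse I would argue the contrapositive: assuming $G$ has no $\{K_{1,1},K_{1,2},C_m\}$-factor, exhibit $X$ with $\operatorname{iso}(G-X)>2|X|$. Taking $X=\emptyset$ in the hypothesis already forces $\delta(G)\ge 1$, so we may assume $G$ has no isolated vertex. The first reduction is to replace cycles by stars: every $C_m$ decomposes into copies of $K_{1,1}$ and $K_{1,2}$ (a perfect matching when $m$ is even, and one $K_{1,2}$ together with $(m-3)/2$ copies of $K_{1,1}$ when $m$ is odd), so $G$ has a $\{K_{1,1},K_{1,2},C_m\}$-factor if and only if it has a $\{K_{1,1},K_{1,2}\}$-factor, i.e.\ a spanning star forest in which every star has one or two edges. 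Thus it suffices to prove the star-factor version, which is the case $r=2$ of the Amahashi--Kano theorem \cite{AK82}.

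\emph{Sufficiency, the argument and the obstacle.} To prove the star-factor statement directly I would start from an edge-minimal spanning subgraph $H$ of $G$ with $\delta(H)\ge 1$; a short argument (deleting an interior edge of a path of length $3$ inside a component preserves minimum degree $1$) shows that $H$ is a star forest with no isolated vertex. If every star of $H$ has at most two edges we are done; otherwise some star has centre $c$ and leaves $\ell_1,\dots,\ell_j$ with $j\ge 3$, and I would run an augmenting-type process, re-attaching surplus leaves to neighbouring stars or splitting off new small stars along edges of $G$, iterating while progress is possible. Either the process terminates with all stars of size at most two, or it jams; in the jammed case the set $X$ of centre-type vertices the process has touched is saturated, its neighbourhood outside $X$ consists of leaves that cannot be redistributed, and counting those leaves against $|X|$ yields $\operatorname{iso}(G-X)>2|X|$ — equivalently one may phrase this via the deficiency form of Hall's theorem applied to the bipartite graph of candidate centres versus candidate leaves with degree demand $1$ on one side and $[1,2]$ on the other. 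The main obstacle is exactly this last step: one must extract the \emph{right} set $X$ (not merely ``the vertices touched''), control how leaves already sitting in small stars interact with the oversized star, and verify that the constant comes out as $2$ rather than something weaker, i.e.\ obtain an exact rather than approximate inequality. An equally viable alternative is induction on $|V(G)|$, deleting a suitable leaf and its neighbour and pulling back either a factor of the smaller graph or a violating set; the same counting difficulty reappears in the pull-back.
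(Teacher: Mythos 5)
The paper itself offers no proof of this lemma --- it is quoted directly from Akiyama--Avis--Era \cite{JH1980} --- so the only question is whether your argument stands on its own. The necessity half does: the component-by-component bound $\operatorname{iso}(F-X)\le 2|X|$ is correct, up to one small slip in the $K_{1,2}$ case (the contribution can be nonzero with the centre \emph{outside} $X_C$, namely when both leaves are deleted and the centre itself becomes isolated; then $|X_C|=2$ and the bound $2|X_C|$ still holds, so nothing breaks). The reduction of the sufficiency direction to the $\{K_{1,1},K_{1,2}\}$-factor case, by decomposing each $C_m$ into a perfect matching or into one $K_{1,2}$ plus a matching, is also correct and useful: it identifies the lemma as the $r=2$ case of the Amahashi--Kano star-factor theorem \cite{AK82}.

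The gap is the sufficiency direction itself, and you have named it yourself. That an edge-minimal spanning subgraph $H$ with $\delta(H)\ge1$ is a star forest is fine, but everything after that is a plan rather than a proof: the ``augmenting-type process'' has no precise definition of a step, no invariant, no termination argument, and --- crucially --- no construction of the violating set $X$ in the jammed case together with the exact count $\operatorname{iso}(G-X)\ge 2|X|+1$. This is precisely the content of the Amahashi--Kano/Las Vergnas theorem, and it is genuinely delicate: the naive choice of $X$ as ``the centres the process has touched'' does not obviously work, because leaves already absorbed into small stars may be reachable from several oversized stars, and one normally has to fix a carefully chosen extremal partial star factor (say, covering the maximum number of vertices, with ties broken by minimizing the number of large stars) before the deficiency count closes with the constant exactly $2$. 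As written, the converse implication is not established; either carry out that extremal argument in full or, more honestly, cite the star-factor theorem for the sufficiency just as the paper cites \cite{JH1980} for the whole lemma.
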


\quad\quad Now we give a proof of Theorem \ref{theorem11}.

\begin{proof}
 We prove it by contradiction. $G$ is not a $\{K_{1,1}, K_{1,2}, C_{m}: m\geq3\}$-factor. 
Then by Lemma \ref{lem333}, there
exists some nonempty subset $X\subseteq V(G)$ such that $\operatorname{iso}(G-X)\geq 2|X| + 1$. 

Suppose that $X=\emptyset$, we have $\operatorname{iso}(G)\geq 1$, which contradicts the condition that $G$ is a connected graph of $\nu \geq4$.

Choose a connected graph $G$ of order $\nu$ such that its size is as large as possible. Based on the choice of $G$, the induced subgraph $G[X]$ and each connected component of
$G-X$ are complete graphs, respectively. Moreover, $G =G[X]\vee(G-X)$.

{\bf Fact 3.}  {$G-X$ has at most  one non-trivial connected component. }

\begin{proof}
Otherwise, adding edges among  all non-trivial connected component results in a larger order non-trivial connected component, contrary to the choice of $G$. This completes the proof of Fact 3.
\end{proof}
   
We distinguish the following two cases to prove.

{\bf Case 1.}  $G_1$ is only one non-trivial connected component of $G-X$.

\quad\quad In this case, let $|V(G_1)| =\nu_1\geq2$. In what follows, we will prove that $\operatorname{iso}(G-X) =2|X| + 1$. Suppose that $\operatorname{iso}(G-X)\geq 2|X| + 2$, then we create a new graph $H_6$ obtained from $G$ by connecting every vertex  of $G_1$ to one vertex of $I(G-X)$ with an edge. Hence, we obtain $|E(G)|< |E(G)| + \nu_1=|E(H_6)|$. Together with $\operatorname{iso}(H_6- X) \geq 2|X| + 1$, we have a contradiction to the choice of $G$. Therefore, we get $\operatorname{iso}(G-X) \leq 2|X| + 1$. Combining with
$\operatorname{iso}(G-X) \geq 2|X| + 1$, we have $\operatorname{iso}(G-X) = 2|X| + 1$ and $G =K(|X|; \nu_1, (2|X| + 1)\times 1)$.
Observe that $\nu = 3|X| + \nu_1+ 1 \geq 3|X| + 3 \geq 6$ and $|E(G)| =\binom{\nu-2|X|-1}{2}+(2|X| + 1)|X|=: h (\nu, |X|)$.

{\bf Fact 4.} Let $\nu \geq 3|X| + 3$, $|X|$ be  two positive integers, and let $h(\nu, |X|) =\binom{\nu-2|X|-1}{2}+(2|X| + 1)|X|$. Then
$ h(\nu, 1)\geq h(\nu, |X|)$.

\begin{proof}
If $|X| = 1$, then $h (\nu, |X|)=h(\nu, 1)$. We aim to demonstrate that  the result holds for $|X| \geq2.$  Through a simple calculation, we derive
$$
\begin{aligned}h (\nu, |X|)-h(\nu, 1) &=\binom{\nu-2|X| - 1}{2} +(2|X| + 1)|X| -\binom{\nu - 3}{2}-3\\
&=2(|X|- 1)(2|X|  - \nu  + 4). 
\end{aligned}
$$Next we prove $2|X|  - \nu  + 4  < 0$. Together with  $|X| \geq 2$ and $\nu \geq 3(|X| + 1)$, we have
$$2|X|  - \nu  + 4  \leq 2|X|  - 3(|X| + 1)  + 4
=1-|X|
\leq - 1
<0.$$
This completes the proof of Fact 4.
\end{proof}

According to Fact 4, it follows that $|E(G)| \leq h (\nu, 1)$ for $\nu \geq 6$.
Next, we consider $\nu = 7$.
By a simple calculation, we get $h (7, 1) = 9 < 11$. Hence, we obtain a contradiction to the
condition.

{\bf Case 2.} $G - X$ is no non-trivial connected component.

\quad\quad In what follows, we will show that $\operatorname{iso}(G-X) \leq 2|X| + 2$. Suppose that $\operatorname{iso}(G-X) \geq 2|X| + 3$, then  we get a new graph $H_{7}$ obtained from $G$ by adding an edge
of $I(G-X)$. Hence, we obtain $H_{7}-X$ has only one non-trivial connected component and $\operatorname{iso}(H_{7}- X)\geq 2|X| + 1$. Thus, we have
$ |E(H_{7})|>|E(G)|$, contrary to the choice of $G$.

Observe that  $\operatorname{iso}(G-X) \geq 2|X|+1$. Next, consider that $\operatorname{iso}(G-X) = 2|X|+1$ and $\operatorname{iso}(G-X) = 2|X|+2$.
We proceed by considering the following two subcases.

{\bf Subcase 2.1.} $\operatorname{iso}(G-X) = 2|X| + 1$. 

\quad\quad In this subcase $G = K(|X|; (2|X| + 1)\times 1)$, $\nu = 3|X| + 1 \geq4$ and $|E(G)| =\binom{|X|}{2}+(2|X| + 1)|X|$.

If $|X| = 2$, then $\nu = 7$ and $|E(G)| = 11$. 
If $|X| \neq 2 (\nu = 4$ or $\nu = 3|X| + 1 \geq 10)$, we obtain
$$h (\nu, 1) -|E(G)| = \binom{3|X|-2}{2} +3-\binom{|X|}{2}-(2|X| + 1)|X|= 2(|X| -1)(|X|- 3) \geq 0.$$

Thus, we get $|E(G)| \leq h (\nu, 1) =\binom{\nu-3}{2}+3$, a contradiction.

{\bf Subcase 2.2.} $\operatorname{iso}(G-X) = 2|X| + 2$.

\quad\quad In this subcase $G = K(|X|; (2|X| + 2)\times 1)$ , $\nu = 3|X| + 2$ and $|E(G)| = \binom{|X|}{2}+(2|X| + 2)|X|.$  Hence, we get
$$h (\nu, 1) - |E(G)| =\binom{3|X|  - 1}{2} +3 -\binom{|X|}{2}-(2|X| + 2)|X|= 2(|X|-1)(|X|-2).$$
If $|X| = 1$, then $|E(G)| = h (\nu, 1) =\binom{\nu-3}{2}+3$. 
If $|X| = 2$, then $|E(G)| =h (\nu, 1) =\binom{\nu-3}{2}+3$.

Next, we will show that $h (\nu, 1)-|E(G)| >0$ for $|X| \geq 3$. Since $|X|\geq 3$, we obtain
$2(|X|-1)(|X|-2)>0.$
Thus, we get $|E(G)| < h(\nu, 1) =\binom{\nu-3}{2}+3$, a contradiction.

\quad\quad This finishes the proof of Theorem \ref{theorem11}.

\end{proof}

	\item {\bf Spectral radius}

We give the proof of Theorem \ref{theorem12}, which gives a sufficient condition via the spectral radius of a connected graph $G$ to ensure that $G$ is a $\{K_{1,1}, K_{1,2}, C_{m}: m\geq3\}$-factor.

\quad\quad Now we give a proof of Theorem \ref{theorem12}.

\begin{proof}
 We prove it by contradiction. $G$ is not a $\{K_{1,1}, K_{1,2}, C_{m}: m\geq3\}$-factor. 
Then by Lemma \ref{lem333}, there
exists some nonempty subset $X\subseteq V(G)$ such that $\operatorname{iso}(G-X)\geq 2|X| + 1$. 

Suppose that $X=\emptyset$, we have $\operatorname{iso}(G)\geq 1$, which contradicts the condition that $G$ is a connected graph of $\nu \geq4$.

Choose a connected graph $G$ of order $\nu$ such that its adjacency spectral radius  is as large as possible. Based on the choice of $G$, the induced subgraph $G[X]$ and each connected component of
$G-X$ are complete graphs, respectively. Moreover, $G =G[X]\vee(G-X)$.

{\bf Fact 5.}  {$G-X$ has at most  one non-trivial connected component. }

\begin{proof}
Otherwise, adding edges among  all non-trivial connected component results in a larger order non-trivial connected component, contrary to the choice of $G$. This completes the proof of Fact 5.
\end{proof}

   Let $\alpha(x) =x^3 - (\nu -5)x^2 -(\nu - 1)x+3\nu-15$ be a real function in $x$ and let $\beta(\nu)$ be the largest root of $\alpha(x) = 0.$
We distinguish the following two cases to prove.

{\bf Case 1.}   $G_1$ is only one non-trivial connected component of $G-X$.

\quad\quad In this case, let $|V(G_1)| =\nu_1\geq2$.  In what follows, we will prove that  $\operatorname{iso}(G-X) =2|X| + 1$.  Suppose that $\operatorname{iso}(G-X)\geq 2|X| + 2$, then we  create  a new graph $H_6$ obtained from $G$ by connecting every vertex of $G_1$ with one vertex of $I(G-X)$ by an edge. 

Hence, we obtain that a proper
spanning subgraph of $H_6$ is $G$ and $\operatorname{iso}(H_6-X) \geq 2|X|+1$. Together with  Lemma \ref{lem444}, we have $\rho(G) < \rho(H_6)$, contrary to the choice of $G$. Thus, we get $\operatorname{iso}(G-X) \leq 2|X| + 1$.

Observe that $\operatorname{iso}(G-X)\geq 2|X| + 1$. Then  $\operatorname{iso}(G-X) = 2|X| + 1$, $\nu = 3|X| + \nu_1+ 1 \geq 3(|X| + 1) \geq 6$ and $G = K(|X|; \nu_1, (2|X| + 1)\times1)$.
According to the partition
 $V(G) = V(K_{|X|}) \cup V((2|X| + 1)K_1) \cup V(K_{\nu_1})$, the quotient matrix of $A(G)$
is
$$M_7 =\left(
       \begin{array}{ccc}
          |X| -1 & 2|X| + 1 & \nu - 3|X|- 1 \\
         |X| & 0 & 0\\
         |X| & 0 & \nu- 3|X| -2 \\
       \end{array}
     \right).
      $$
Then, the characteristic polynomial of $M_7$ equals
$$\Phi_{M_7}(x) = x^3- (\nu-2|X| -3)x^2-(2|X|^2+ \nu - |X| -2)x -6|X|^3 + (2\nu- 7)|X|^2 + (\nu  - 2)|X|.$$
Since $V(G) = V(K_{|X|})\cup V((2|X|+ 1)K_1)\cup V(K_{\nu_1})$ is an equitable partition, together
with Lemma \ref{lem5}  we get that the largest root of $\Phi_{M_7}(x) = 0$, denoted as $\beta_1$, equals $\rho(G)$.

Observe that $K(|X|; (2|X|+1)\times 1)$ is a proper subgraph of $G$. Consider the partition  $V(K(|X|; (2|X|+1)\times 1)) = V(K_{|X|})\cup V((2|X|+1)K_1)$.  The corresponding quotient matrix of $A(K(|X|; (2|X|+1)\times 1))$ is denoted
as
$$M_8=\left(
       \begin{array}{cc}
          |X| -1 & 2|X| + 1  \\
         |X| &  0\\
       \end{array}
     \right).
      $$
Then the characteristic polynomial of $M_8$ is given by
$$\Phi_{M_8}(x) = x^2 -(|X|- 1)x-(2|X| + 1)|X|.$$
Since $V(K(|X|; (2|X|+1)\times 1)) = V(K_{|X|}) \cup V((2|X| + 1)K_1)$ is an equitable partition, according to
 Lemma \ref{lem5}, we get that the largest root of $\Phi_{M_8}(x) = 0$, say
$\beta_2$, equals $\rho(K(|X|; (2|X|+1)\times 1))$. Hence,
we obtain
$$\rho(K(|X|; (2|X|+1)\times 1))=\beta_2=\frac{\sqrt{9|X|^2+2|X|+1}+|X|-1}{2}. $$

According to Lemma \ref{lem444}, $ \beta_2<\beta_1$. To show $\beta(\nu)\geq\rho(G)$, it suffices to prove $\alpha(\beta_1) \leq 0$.
Together with $\Phi_{M_7}(\beta_1) = 0$, we obtain
$$\alpha(\beta_1) = \alpha(\beta_1)-\Phi_{M_7}(\beta_1)  = (|X|-1)f_1,$$
where $f_1 = -2\beta_1^2 + (2|X| + 1)\beta_1 + 6|X|^2 - ( 2\nu-13)|X|   - 3\nu     + 15$. Clearly, $\alpha(\beta_1) = 0$ for
$|X| = 1$. So, a root of $\alpha(x)$ is $\beta_1$ and  $\rho(G) = \beta_1 \leq \beta(\nu)$. Next, we  will prove $f_1<0$ for $|X| \geq 2$.

Together with $\nu \geq 3(|X| + 1),$ we have
$$
\begin{aligned}
f_1& =-(2|X| + 3)\nu - 2\beta_1^2 + (2|X| + 1)\beta_1 + 6|X|^2 + 13|X| + 15\\
&\leq -3(2|X| + 3)(|X| + 1)-2\beta_1^2 + (2|X| + 1)\beta_1+ 6|X|^2 + 13|X|  + 15\\
&= -2\beta_1^2+ (2|X| + 1)\beta_1 -2|X| + 6.
\end{aligned}
$$
Let $g_{11}(x) = -2x^2 + (2|X| + 1)x  -2|X| + 6$ be a real function in $x$ for $x\in[\beta_2, +\infty)$. It is routine to check that the derivative function of $g_{11}(x)$ is
$$g_{11}'(x) = -4x + 2|X| + 1.$$
Thus, the unique solution of $g_{11}'(x) = 0$ is $\frac{2|X|+1}{4}$. Note that $\beta_2 =\frac{\sqrt{9|X|^2+2|X|+1}+|X|-1}{2}$. Obviously,
$$\frac{2|X| + 1}{4}
= \frac{|X|}{2}
+ \frac{1}{4}< \beta_2.$$
Therefore $g_{11}(x)$ is decreasing in the interval $[\beta_2, +\infty)$. Hence,
$$g_{11}(\beta_1) < g_{11}(\beta_2) = -4|X|^2 -\frac{5}{2}|X| +\frac{3}{2}\sqrt{9|X|^2 + 2|X| + 1}+ \frac{9}{2}.$$
Observe that
$$4^2|X|^2- (9|X|^2  + 2|X| + 1) = 7|X|^2  - 3|X|-1 > 0.$$
We have
$$g_{11}(\beta_1) < -4|X|^2  -\frac{5}{2}|X| + 6|X|+ \frac{9}{2}
= - 4|X|^2 + \frac{7}{2} |X| + \frac{9}{2}.$$
Let $g_{12}(x) =- 4x^2 + \frac{7}{2}x + \frac{9}{2}$ be a real function in $x$ for $x \in [2, +\infty)$. It is routine to check that the derivative function of $g_{12}(x)$ as
$$g_{12}'(x) = -8x + \frac{7}{2}.$$
Thus the unique solution of $g_{12}'(x) = 0$
is $\frac{7}{16}$. Together with
$$\frac{7}{16}< 2,$$
we have that $g_{12}(x)$ is decreasing in the interval $[2, +\infty)$. Then
$$g_{12}(x) \leq g_{12}(2) =-\frac{9}{2}.$$

 Thus, $f_1 < 0$ and  $\alpha(\beta_1) < 0$ when $|X| \geq 2$. Therefore, $\beta(\nu) \geq \beta_1 = \rho(G)$ when $\nu \geq 6$.
Next, we consider $\nu = 7$.
It is straightforward to check that $\beta(7) \approx 3.2731 < \frac{1+\sqrt{41}}{2}
\approx 3.7016.$  Hence,
we obtain a contradiction.

{\bf Case 2.} $G - X$ has no non-trivial connected component.

\quad\quad In what follows, we will show that  $\operatorname{iso}(G-X) \leq 2|X| + 2$.  Suppose that  $\operatorname{iso}(G-X) \geq 2|X| + 3$, then we have a new graph $H_{7}$ obtained from $G$ by adding an edge
in $I(G-X)$. Hence, we get $H_{7}-X$ has only one non-trivial connected component and $\operatorname{iso}(H_{7}- X)\geq 2|X| + 1$. By
Lemma \ref{lem444}, we obtain $\rho(G) < \rho(H_{7})$, contrary to the choice of $G$.

Note that $\operatorname{iso}(G-X) \geq 2|X|+1$. Next, we will consider $\operatorname{iso}(G-X) = 2|X|+1$  and $\operatorname{iso}(G-X) = 2|X|+2$.
We proceed by considering the following two subcases.

{\bf Subcase 2.1.} $\operatorname{iso}(G-X) = 2|X| + 1$. 

\quad\quad In this subcase, we have $G = K(|X|; (2|X| + 1)\times1)$ and $\nu = 3|X| + 1\geq4$. Hence, we obtain
$$\rho(G) =\beta_2 = \frac{\sqrt{9|X|^2 + 2|X| + 1}+|X| -1}{2}.$$

If $|X| = 2$, then $\nu = 7$ and $\rho(G) =\frac{1 +\sqrt{41}}{2}$.

If $|X| \neq 2$ ($\nu = 4$ or $\nu \geq 10$), then it is easy to see that 
$$\alpha(x) = x^3 - (3|X|-4)x^2-3(|X|x - 3|X|+4).$$
Next, we will show $\alpha(\rho(G)) \leq 0$. It is straightforward to check that $\alpha(\rho(G)) = \frac{(|X|-1)f_2}{2},$
where $f_2 = -8|X|^2 + 3\sqrt{9|X|^2 + 2|X| + 1}
+3|X| + 21.$

Obviously, if $|X| = 1$, then $\alpha(\rho(G)) = 0$ and  $\rho(G) \leq \beta(4)$. So in what follows, we consider 
$|X| \geq 3$, it suffices to prove $f_2 < 0$. 
Observe that
$$\sqrt{9|X|^2 + 2|X| + 1} =\sqrt {9(|X| + \frac{1}{9})^2 +\frac{8}{9}}<\sqrt {9(|X| + \frac{1}{9})^2} + 1.$$
Hence, we obtain
$$f_2 < -8|X|^2 + 9(|X| + \frac{1}{9}) + 3 + 3|X| + 21 = -8|X|^2 + 12|X| + 25.$$
Let $g_{13}(x) = -8x^2 + 12x + 25$ be a real function in $x$ for $x \in [3, +\infty)$. It is routine to check that the derivative function
of $g_{13}(x)$ is
$$g_{13}'(x) = -16x + 12.$$
Hence, the unique solution of $g_{13}'(x) = 0$
is $\frac{3}{4}$. Thus $g_{13}(x)$ is a decreasing in the interval $[3, +\infty)$. Therefore we get $g_{13}(x)\leq
g_{13}(3) = -11$. So, $f_2 < 0$ and  $\rho(G) < \beta(\nu)$ for $|X| \geq 3$, a contradiction.

{\bf Subcase 2.2.} $\operatorname{iso}(G-X) = 2|X| + 2$.

\quad\quad In this subcase, we get $G = K(|X|; (2|X| + 2)\times1)$ and $\nu = 3|X| + 2\geq5$. According
to the partition $V(G) = V(K_{|X|})\cup V((2|X|+2)K_1)$, the quotient matrix of  $A(G)$ is
$$M_{9} =\left(
       \begin{array}{cc}
         |X| - 1 & 2|X| + 2 \\
         |X|  & 0 \\
       \end{array}
     \right)
.$$
Then, the characteristic polynomial of $M_{9}$ is
$$\Phi_{M_{9}}(x) = x^2-(|X| -1)x -(2|X| + 2)|X|.$$
Since $V(G) = V(K_{|X|}) \cup V((2|X| + 2)K_1)$ is an equitable partition, together with  Lemma \ref{lem5}, 
we obtain that the largest root of $\Phi_{M_{9}}(x) = 0$, say $\beta_3$, equals $\rho(G)$. It is straightforward to check that 
$$\beta_3 = \rho(G) = \frac{\sqrt{9|X|^2 + 6|X| + 1}+|X|-1}{2}.$$
Together with $\nu = 3|X| + 2$, we have
$$\alpha(x) = x^3 -3 (|X|- 1)x^2 -(3|X|+1)x  + |X| - 9.$$
Next, we will show $\alpha(\rho(G)) \leq 0$. It is straightforward to check that  $\alpha(\rho(G)) = (|X|-1)\frac{f_3}{2}$, where
$$f_3 = -8|X|^2 + 3\sqrt{9|X|^2 + 6|X| + 1}- 5|X|  + 15.$$
Obviously, $\alpha(\rho(G)) = 0$ and  $\beta(\nu) \geq \rho(G)$ for $|X| = 1$.
So in what follows, we
consider  $|X| \geq 2$, it suffices to show  $f_3 < 0$.  Observe  that $4^2|X|^2 > 9|X|^2+6|X|+1.$
Thus, we obtain
$$f_3 <  -8|X|^2 + 7|X|  + 15.$$
Let $g_{14}(x) = -8x^2 + 7x + 15$ be a real function in $x$ for $x \in [2, +\infty)$. Then, the characteristic polynomial of $g_{14}(x)$ is
$$g_{14}'(x) = -16x + 7.$$
Thus, the unique solution of $g_{14}'(x) = 0$ is $\frac{7}{16}$. Obviously,
$$\frac{7}{16}< 2.$$
Hence, $g_{14}(x)$ is decreasing in the interval $[2, +\infty)$. So, $g_{14}(x) \leq g_{14}(2) = -3.$ 
Thus, $f_3 <0$ and $\rho(G) < \beta(\nu)$ for $|X| \geq2$, a contradiction to the condition.
This finishes the proof of Theorem \ref{theorem12}.

\end{proof}

	\item {\bf Distance spectral radius}

We give the proof of Theorem \ref{theorem13}, which gives a sufficient condition via the distance
spectral radius of a connected graph $G$ to ensure that $G$ is a strong star factor$\{K_{1,1}, K_{1,2}, C_{m}: m\geq3\}$-factor.

\quad\quad Now we give a proof of Theorem \ref{theorem13}.

\begin{proof}
  We prove it by contradiction. $G$ is not a $\{K_{1,1}, K_{1,2}, C_{m}: m\geq3\}$-factor. 
Then by Lemma \ref{lem333}, there
exists some nonempty subset $X\subseteq V(G)$ such that $\operatorname{iso}(G-X)\geq 2|X| + 1$. 

Suppose that $X=\emptyset$, we have $\operatorname{iso}(G)\geq 1$, which contradicts the condition that $G$ is a connected graph of $\nu \geq4$.

Choose a connected graph $G$ of order $\nu$ such that its distance
spectral radius  is as small as possible. Together with Lemma \ref{lem644} and the choice of $G$, the induced subgraph $G[X]$ and each connected component of
$G-X$ are complete graphs, respectively. Moreover, $G =G[X]\vee(G-X)$.

{\bf Fact 6.}  {$G-X$ has at most  one non-trivial connected component. }

\begin{proof}
Otherwise, adding edges among  all non-trivial connected component results in a larger order non-trivial connected component, a contradiction to the choice of $G$. This completes the proof of Fact 6.
\end{proof}

In what follows, we proceed by considering the two possible cases.

{\bf Case 1.}  $G_1$ is only one non-trivial connected component of $G-X$

\quad\quad In this case, let $|V(G_1)| =\nu_1\geq2$. In what follows, we will prove that  $\operatorname{iso}(G-X) =2|X| + 1$. Suppose that $\operatorname{iso}(G-X)\geq 2|X| + 2$, then we get a new graph $H_6$ obtained from $G$ by connecting every vertex of $G_1$ with one vertex of $I(G-X)$ by an edge.
Hence, we get that 
a proper
spanning subgraph of $H_6$ is $G$ and $\operatorname{iso}(H_6-X) \geq 2|X|+1$.  Together with Lemma \ref{lem644}, we obtain $ \mu_1(H_6)<\mu_1(G)$, contrary to the choice of $G$. Hence, we get  $\operatorname{iso}(G-X) \leq 2|X| + 1$.
Observe that $\operatorname{iso}(G-X) \geq 2|X|+1$. Then $\operatorname{iso}(G-X) = 2|X|+1$ and $G = K(|X|; \nu_1, (2|X|+1)\times1)$. Obviously,  $\nu = 3|X|+\nu_1+1 \geq 3(|X|+1) \geq 6$.
According to the equitable partition $V(G) = V(K_{|X|}) \cup V((2|X| + 1)K_1) \cup V(K_{\nu_1})$, the quotient
matrix of $D(G)$ is
$$M_{10} =\left(
       \begin{array}{ccc}
          |X| -1 & 2|X| + 1 & \nu - 3|X|- 1 \\
         |X| & 4|X| & 2(\nu- 3|X| - 1)\\
         |X| & 2(2|X| + 1) & \nu -3|X| - 2 \\
       \end{array}
     \right).
      $$
Hence the characteristic polynomial of $M_{10}$ is 
$$
\begin{aligned}\Phi_{M_{10}}(x)=&x^3 -(2|X| + \nu- 3)x^2 +(14|X|^2 - 4\nu|X|  +9|X| - 5\nu +6)x\\
&-6|X|^3 + 2\nu|X|^2+ 9|X|^2-3\nu|X|  + 10|X| -4\nu  + 4.
\end{aligned}
$$
Since $V(G) = V(K_{|X|}) \cup V((2|X| + 1)K_1) \cup V(K_{\nu_1})$ is an equitable partition, together with Lemma \ref{lem5}, we get that the largest root of $\Phi_{M_{10}}(x) = 0$, say $\gamma_1$, equals  $\mu_1(G)$.

 Next, we will prove the following fact which compares $\mu_1(G)$ with that of $K(1; \nu-4, 3\times1)$  at first.

{\bf Fact 7.} $\mu_1(G) \geq \mu_1(K(1; \nu-4, 3\times1))$ for $\nu \geq 3|X| + 3 \geq6$.

\begin{proof}
It is straightforward to check that   $G = K(1; \nu-4, 3\times1)$ for $|X| = 1$. Next, we only need
to show $\mu_1(G)>\mu_1(K(1; \nu-4, 3\times1))$ if $|X| \geq 2$.

According to the partition $V(K_1) \cup V(3K_1) \cup V(K_{\nu-4})$, the quotient
matrix of $D(K(1; (\nu-4, 3\times1))$ is
$$M_{11} =\left(
       \begin{array}{ccc}
          0 & 3 & \nu - 4 \\
         1 & 4 & 2(\nu- 4)\\
         1 & 6 & \nu -5 \\
       \end{array}
     \right).
      $$
Then, the characteristic polynomial of $M_{12}$ is
$$\Phi_{M_{11}}(x) = x^3-(\nu - 1)x^2 - (  9\nu  - 29)x - 5\nu  + 17.$$

Since $V(K_1) \cup V(3K_1) \cup V(K_{\nu-4})$ is an equitable partition, together
with Lemma \ref{lem5}, we have that the largest root of $\Phi_{M_{11}}(x) = 0$, say  $\gamma_2$, equals  $\mu_1(K(1; (\nu-4, 3\times1))$.
In what follows, we show $\Phi_{M_{10}}(\gamma_2) = \Phi_{M_{10}}(\gamma_2)-\Phi_{M_{11}}(\gamma_2) <0$ if $|X| \geq2$. It is straightforward to check that
$$\Phi_{M_{10}}(\gamma_2)-\Phi_{M_{11}}(\gamma_2) =(|X| - 1)[ - 2\gamma_2^2 + (14|X| -4\nu  + 23)\gamma_2-6|X|^2+ 3|X|+ \nu(2|X|-1) + 13].$$
Let $g_{15}(x) = - 2x^2 + (14|X| -4\nu  + 23)x-6|X|^2 + 3|X| + \nu(2|X|-1) + 13$ be a real function
in $x$. It is routine to check that the derivative function of $g_{15}(x)$ is
$$g_{15}'(x) = -4x + 14|X|  -4\nu   + 23.$$
Thus the unique solution of $g_{15}'(x) = 0$ is $-\nu + \frac{14|X|+23}{4}$. Together with
$$-\nu + \frac{14|X|+23}{4}< \nu + 2,$$
we obtain that $g_{15}(x)$ is a decreasing function in the interval $[\nu + 2, +\infty)$. According to Lemma \ref{lem9}, we get
$$\gamma_2= \mu_1(K(1; (\nu-4, 3\times1)) \geq \nu + 5 -\frac{18}{\nu}.$$
Note that $\nu \geq 3(|X| + 1) > 7$. Then
$$\gamma_2 > \nu + 5 - \frac{ 18 }{7}= \nu + 3  -\frac{4}{7}> \nu + 2.$$
Thus, we have $g_{15}(\gamma_2) <g_{15}(\nu + 2) = -6\nu^2 + (16|X| + 6)\nu -6|X|^2+ 31|X|  + 51.$ Let
$g_{16}(x) = -6x^2 + (16|X| + 6)x -6|X|^2 + 31|X| + 51$ be a real function in $x$ for $x\in[3(|X| + 1), +\infty)$.
It is routine to check that the derivative function of $g_{16}(x)$ is
$$g_{16}'(x) =  -2(6x - 8|X| - 3).$$
Therefore,
the unique solution of $g_{16}'(x) = 0$ is $\frac{ 8|X| + 3}{6}$. Together with
$$\frac{ 8|X| + 3}{6}< 3(|X| + 1),$$
we get that $g_{16}(x)$ is a decreasing  function in the interval $[3(|X| + 1), +\infty)$. According
to $\nu \geq 3(|X| + 1)$, we obtain
$$g_{16}(\nu) \leq g_{16}(3(|X| + 1)) = -12 |X|^2 -11|X| + 15.$$
Let $g_{17}(x) = -12 x^2 -11x + 15$ be a real function in $x$ for $x \in [2, +\infty)$. It is routine to check that the derivative function of $g_{17}(x)$ is
$$g_{17}'(x) = -24 x -11.$$
Thus
the unique solution of $g_{17}'(x) = 0$ is $-\frac{11}{24}$. Together with
$$-\frac{11}{24}< 2,$$
 we have that $g_{17}(x)$ is a decreasing  function in the interval $[2, +\infty)$. Hence, $g_{17}(x) \leq g_{17}(2) = -55 < 0.$ Therefore, $\Phi_{M_{10}}(\gamma_2)< 0$ and  $\mu_1(G) > \mu_1(K(1; (\nu-4, 3\times1))$ for $|X|\geq2$.
This completes the proof of Fact 7.
\end{proof}

Next, we consider $\nu = 7$.
By a simple calculation, we obtain $\mu_1(K(1; 3, 3\times 1))\approx 9.6974 > \mu_1(K(2; 5\times1) \approx 9.2170$. Hence, we get a contradiction to the condition for Case 1.

{\bf Case 2.} $G - X$ has no non-trivial connected component.

 \quad\quad In what follows, we will show that $\operatorname{iso}(G-X) \leq 2|X| + 2$. Suppose that $\operatorname{iso}(G-X) \geq 2|X| + 3$, then we get a new graph $H_{7}$ obtained from $G$ by adding an edge
in $I(G-X)$. Thus, we obtain $H_{7}-X$ has only one non-trivial connected component and $\operatorname{iso}(H_{7}- X)\geq 2|X| + 1$. Hence, we have
$|E(G)| < |E(H_{7})|$, contrary to the choice of $G$.
 Note that $\operatorname{iso}(G-X) \geq 2|X|+1$. Next, we will consider $\operatorname{iso}(G-X) = 2|X|+1$  and $\operatorname{iso}(G-X)= 2|X|+2$.

{\bf Subcase 2.1.} $\operatorname{iso}(G-X) = 2|X| + 1$. 

\quad\quad In this subcase $G = K(|X|; (2|X| + 1)\times1$ and $\nu = 3|X|+1 \geq 4$. According to the partition $V(G) = V(K_{|X|}) \cup V((2|X| + 1)K_1)$, the quotient matrix of $D(G)$ equals
$$M_{12} =\left(
       \begin{array}{cc}
          |X| -1 & 2|X| + 1  \\
         |X| & 4|X| \\
       \end{array}
     \right).
      $$
Then, the characteristic polynomial of $M_{12}$ is
$$\Phi_{M_{12}}(x) = x^2- (5|X| - 1)x + 2|X|^2 - 5|X|.$$
Since $V(G) = V(K_{|X|})\cup V((2|X|+1)K_1)$ is an equitable partition, by Lemma \ref{lem5}, we have that the largest root of $\Phi_{M_{12}}(x) = 0$, say $\gamma_3$,
equals $\mu_1(G)$.
Hence,
 $$\Phi_{M_{11}}(x) = x^3-3|X|x^2- (27|X|-20)x -15|X| + 12.$$
 It is routine to check that $G = K(1; 3\times1)$ and $\nu = 4$  for $|X| = 1$. If $|X| = 2$,
then $G = K(2; 5\times1)$ and $\nu = 7$. If $|X| = 3$, then $\nu = 10$ and $\mu_1(G) = 7+\sqrt{46}\approx 13.7823 > \mu_1(K(1; 6, 3\times1))\approx13.6470$. We are to show $\mu_1(G) > \mu_1(K(1; \nu-4, 3\times1))$ and $\nu\geq13$ when $|X| \geq 4$.
Let $g_{18} (x) = x\Phi_{M_{12}}(x)-\Phi_{M_{11}}(x)$. Hence, we have
$$g_{18} (x) = -(2|X| - 1)x^2 + 2(|X|^2 + 11|X| -10)x + 3(5|X|- 4).$$
 It is routine to check that the derivative function of  $g_{18}(x)$ is
$$g_{18}'(x) = (-4|X| + 2)x + 2(|X|^2 + 11|X| - 10).$$
Thus,
the unique solution of $g_{18}'(x) = 0$ is $\frac{|X|^2+11|X|-10}{2|X|-1}$. Together with
$$\frac{|X|^2 + 11|X| -10}{2|X|- 1}< 3|X| + 4.$$
we get that $g_{18}(x)$ is a decreasing function in the interval $[3|X| + 4, +\infty)$. Note that $\gamma_2 \geq \nu + 5 -\frac{18}{\nu}$ and $\nu\geq 13$. Then
$$\gamma_2 \geq3|X| + 6 - \frac{18}{13}> 3|X| + 4.$$
Hence $g_{18}(\gamma_2) < g_{18}(3|X| + 4) = -12|X|^3 + 35|X|^2 + 35|X| - 76$. Let $g_{19}(x) = -12x^3 + 35x^2 + 35x-76$ be a real function in $x$ for
$x \in [4, +\infty)$. It is routine to check that the derivative function of $g_{19}(x)$ is
$$g_{19}'(x) = -36x^2 + 70x + 35 =-36(x- \frac{35-\sqrt{2485}}{36})(x- \frac{35 +\sqrt{2485}}{36}).$$
Obviously, $\frac{35-\sqrt{2485}}{36}< \frac{35 +\sqrt{2485}}{36}< 4$. Thus  $g_{19}(x)$ is a decreasing function in the interval $[4, +\infty)$. So $g_{19}(x) \leq g_{19}(4) = -144 < 0$. Therefore,
$\mu_1(G) > \mu_1(K(1; \nu-4, 3\times1))$, a contradiction.

{\bf Subcase 2.2.} $\operatorname{iso}(G-X) = 2(|X|+ 1)$.

\quad\quad In this subcase $G = K(|X|; (2(|X| + 1))\times1)$ and $\nu = 3|X| + 2$. According to the partition $V(G) =
V(K_{|X|}) \cup V(2(|X| + 1)K_1)$), the quotient matrix of  $D(G)$ is
$$M_{13} =\left(
       \begin{array}{cc}
          |X| -1 & 2|X| + 1  \\
         |X| & 2(2|X| + 1) \\
       \end{array}
     \right).
      $$
Then, the characteristic polynomial of  $M_{13}$ is
$$\Phi_{M_{13}}(x) = x^2 - (5|X|  + 1)x + 2|X|^2 - 4|X|- 2.$$
Since $V(G) = V(K_{|X|})\cup V((2|X|+2)K_1)$ is an equitable partition, by Lemma \ref{lem5}, the largest root of $\Phi_{M_{13}}(x) = 0$, say $\gamma_4$,
equals $\mu_1(G)$.
Hence, we obtain
 $$\Phi_{M_{11}}(x) = x^3- (3|X| + 1)x^2 - (27|X| - 11)x - 15|X| + 7.$$
Next, we are to show $\mu_1(G) \geq\mu_1(K(1; 3|X|-2, 3\times1))$.
Obviously, $G =K(1; 1, 3\times1)$ for $|X| =1.$ If $|X|=2$, then $G =K(2; 6\times1)$ and $\nu =8$. By a simple calculation, we get
$\mu_1(G) \approx 11.1789 > \mu_1(K(1; 4, 3\times1) ) \approx 11.0715$. Hence it suffices to prove $\gamma_2\Phi_{M_{13}}(\gamma_2) - \Phi_{M_{11}}(\gamma_2) < 0$ for $|X| \geq 3$. Note that
$$\gamma_2\Phi_{M_{13}}(\gamma_2) - \Phi_{M_{11}}(\gamma_2) = -2|X|\gamma_2^2 + (2|X|^2 + 23|X| - 13)\gamma_2 + 15|X| - 7.$$
Let $g_{20}(x) := -2|X|x^2 + (2|X|^2 + 23|X| - 13)x + 15|X| - 7$ to be a real function in $x$.  It is routine to check that the derivative function of  $g_{20}(x)$ is
$$g_{20}'(x) = -4|X|x + 2|X|^2 + 23|X| - 13.$$
Clearly, the unique solution of $g_{20}'(x) = 0$ is $\frac{|X|}{2}+ \frac{23}{4}-\frac{13}{4|X|}$ . Together with
$$\frac{|X|}{2}+ \frac{23}{4}-\frac{13}{4|X|}< 3|X| + 5,$$
we have that $g_{20}(x)$ is a decreasing function in the interval $[3|X| + 5, +\infty)$. Note that $\gamma_2 \geq \nu + 5 -\frac{18}{\nu}$ and $\nu = 3|X| + 2 \geq 11$. Then
$$\gamma_2\geq 3|X| + 7 - \frac{18}{11} > 3|X| + 5.$$
Hence, $g_{20}(\gamma_2) < g_{20}(3|X| + 5) = -12|X|^3 + 19|X|^2 + 41|X|-72.$ Let $g_{21}(x) = -12x^3 + 19x^2 + 41x -72$ be a real function in $x$
for $x \in [3, +\infty)$. The derivative function of $g_{21}(x)$ is
$$g_{21}'(x) = -36x^2 + 38x + 41 = -36(x-\frac{19 -\sqrt{1837}}{36})(x - \frac{19 +\sqrt{1837}}{36}).$$
Obviously,
$$\frac{19 -\sqrt{1837}}{36}< \frac{19 +\sqrt{1837}}{36}< 3.$$
Thus $g_{21}(x)$ is a decreasing function in the interval $[3, +\infty)$. Hence $g_{21}(x) < g_{21}(3) = -102 < 0$. So, $\mu_1(G) > \mu_1(K(1; 3|X|-2, 3\times1))$
for $|X| \geq 3$, a contradiction.
This finishes the proof of Theorem \ref{theorem13}.

\end{proof}

\end{enumerate}

\section{Concluding remarks}

\begin{enumerate}
	\item 
At last we prove the bounds obtained in Theorems \ref{theorem11}-\ref{theorem13} are best possible.

\begin{theo}\label{theorem14}
Let $\nu \geq4$ be a integer. Then  
$K(2; 5\times1)$ and $K(1; \nu-4, 3\times1)$ have no a $\{K_{1,1}, K_{1,2}, C_{m}: m\geq3\}$-factor.
\end{theo}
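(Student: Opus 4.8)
The plan is to apply the Akiyama--Era characterization (Lemma~\ref{lem333}) directly: to certify that a graph $G$ has no $\{K_{1,1},K_{1,2},C_m:m\geq3\}$-factor it suffices to exhibit a single vertex subset $X\subseteq V(G)$ witnessing $\operatorname{iso}(G-X)>2|X|$. So the whole argument reduces to choosing the right $X$ in each of the two graphs.

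First, for $G=K(2;5\times1)=K_{2}\vee(5K_{1})$, I would take $X=V(K_{2})$, so that $|X|=2$ and $G-X=5K_{1}$; hence $\operatorname{iso}(G-X)=5>4=2|X|$, and Lemma~\ref{lem333} yields that $G$ has no such factor. Note this graph has $7$ vertices, which is exactly why $\nu=7$ is the exceptional case in Theorems~\ref{theorem11}--\ref{theorem13}.

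Second, for $G=K(1;\nu-4,3\times1)=K_{1}\vee(K_{\nu-4}\cup 3K_{1})$ with $\nu\geq4$, I would take $X$ to be the vertex set of the dominating $K_{1}$, so $|X|=1$ and $G-X=K_{\nu-4}\cup 3K_{1}$, with the convention that $K_{0}$ is the empty graph when $\nu=4$. The three copies of $K_{1}$ are isolated vertices of $G-X$ for every $\nu\geq4$; moreover when $\nu=5$ the block $K_{\nu-4}=K_{1}$ contributes one further isolated vertex. In all cases $\operatorname{iso}(G-X)\geq3>2=2|X|$, so by Lemma~\ref{lem333} the graph $G$ has no $\{K_{1,1},K_{1,2},C_m:m\geq3\}$-factor.

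I do not expect any genuine obstacle here: the statement is a routine consequence of the characterization, and the only point demanding a moment's care is the degenerate behaviour of the $K_{\nu-4}$ block for $\nu\in\{4,5\}$, which can only \emph{increase} $\operatorname{iso}(G-X)$ and therefore never endangers the strict inequality. Combined with Theorems~\ref{theorem11}--\ref{theorem13}, this establishes that the size, spectral-radius, and distance-spectral-radius bounds stated there are best possible.
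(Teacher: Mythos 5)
Your proposal is correct and coincides with the paper's own argument: the paper likewise takes $X=V(K_2)$ for $K(2;5\times1)$ and $X=\{u\}$ with $u$ the dominating vertex for $K(1;\nu-4,3\times1)$, then invokes Lemma~\ref{lem333}. Your extra remark about the degenerate $K_{\nu-4}$ block for $\nu\in\{4,5\}$ is a small but welcome point of care that the paper glosses over.
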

\begin{proof}
Let $u$ be the vertex with the maximum degree of $K(1; \nu-4, 3\times1)$.
 Let $X= \{u\}$ (resp. $V(K_2)$), then $\operatorname{iso}(K(1; \nu-4, 3\times1)-X) \geq 3 > 2=2|X|$ (resp. $\operatorname{iso}(K(2; 5\times1)- X) = 5 >4= 2|X|$). According to Lemma \ref{lem333}, we obtain $K(1; \nu-4, 3\times1)$ and $K(2; 5\times1)$  have no a $\{K_{1,1}, K_{1,2}, C_{m}: m\geq3\}$-factor. This completes the proof of Theorem \ref{theorem14}.
\end{proof}

Note that $|E(K(2; 5\times1))| = 11$. According to Theorem \ref{theorem14},
we get the bounds obtained in Theorems \ref{theorem11} are best possible.
\begin{theo}\label{theorem15}
Let $\nu \geq4$ be a integer. Then 

 $\operatorname{(i)}$ $\rho(K(2; 5\times1))=\frac{1+\sqrt{41}}{2}$.
 
 $\operatorname{(ii)}$ $\rho(K(1; \nu-4, 3\times1))$ is equal to the largest root of $x^3 -(\nu- 5)x^2-(\nu -1)x+3\nu-15 = 0.$

\end{theo}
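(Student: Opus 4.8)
The plan is to compute both spectral radii directly by exploiting the highly symmetric join structure of the two extremal graphs, using the equitable-partition machinery of Lemma \ref{lem5}.

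For part (i), the graph $K(2;5\times 1) = K_2 \vee 5K_1$ has a natural equitable partition $V = V(K_2) \cup V(5K_1)$, with the two vertices of $K_2$ in one class and the five isolated-in-the-complement vertices in the other. The quotient matrix of the adjacency matrix with respect to this partition is
$$
B_1 = \begin{pmatrix} 1 & 5 \\ 2 & 0 \end{pmatrix},
$$
since each vertex of $K_2$ is adjacent to the other vertex of $K_2$ (row sum $1$ within the class) and to all $5$ vertices of $5K_1$, while each vertex of $5K_1$ is adjacent to both vertices of $K_2$ and to none of its own class. The partition is equitable, $B_1$ is nonnegative and irreducible, so by Lemma \ref{lem5} the largest eigenvalue of $B_1$ equals $\rho(K(2;5\times 1))$. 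The characteristic polynomial is $x^2 - x - 10 = 0$, whose largest root is $\frac{1+\sqrt{41}}{2}$, giving (i).

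For part (ii), the graph $K(1;\nu-4,3\times 1) = K_1 \vee (K_{\nu-4} \cup 3K_1)$ has an equitable partition into three classes: the single dominating vertex $V(K_1)$, the clique $V(K_{\nu-4})$, and the three remaining vertices $V(3K_1)$. The dominating vertex is adjacent to all $\nu-1$ other vertices; each vertex of $K_{\nu-4}$ is adjacent to the dominating vertex and to the other $\nu-5$ vertices of the clique; each vertex of $3K_1$ is adjacent only to the dominating vertex. Hence the adjacency quotient matrix is
$$
B_2 = \begin{pmatrix} 0 & \nu-4 & 3 \\ 1 & \nu-5 & 0 \\ 1 & 0 & 0 \end{pmatrix}.
$$
This partition is equitable and $B_2$ is nonnegative irreducible, so again by Lemma \ref{lem5} the largest eigenvalue of $B_2$ is $\rho(K(1;\nu-4,3\times 1))$. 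Expanding $\det(xI - B_2)$ along the last row (or directly) yields $x^3 - (\nu-5)x^2 - (\nu-1)x + 3\nu - 15 = 0$; one should double-check the sign and coefficient of the linear and constant terms by a cofactor expansion, since that is the only place an arithmetic slip could creep in. The largest root of this cubic is therefore $\rho(K(1;\nu-4,3\times 1))$, which is exactly the defining equation for $\beta(\nu)$ in Theorem \ref{theorem12}, proving (ii).

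The only real care needed is the correct bookkeeping of within-class and between-class adjacency counts when writing down $B_2$, and confirming the cubic matches the polynomial $\alpha(x)$ used earlier; there is no genuine obstacle, and the argument is a short verification.
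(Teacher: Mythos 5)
Your proof is correct and takes essentially the same approach as the paper: both use the equitable partitions $V(K_2)\cup V(5K_1)$ and $V(K_1)\cup V(K_{\nu-4})\cup V(3K_1)$, form the adjacency quotient matrices (yours is a permutation of the paper's $M_{15}$, hence has the same characteristic polynomial), and invoke Lemma \ref{lem5}. The cubic you obtain, $x^3-(\nu-5)x^2-(\nu-1)x+3\nu-15$, checks out.
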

\begin{proof}
$\operatorname{(i)}$ According to the partition $V(K(2; 5\times1) = V(K_2) \cup V(5K_1)$, the
quotient matrix of $A(K(2; 5\times1))$ equals
$$M_{14} =\left(
       \begin{array}{cc}
          1 & 5 \\
         2 & 0 \\
       \end{array}
     \right).
      $$

Then, the characteristic polynomial of $M_{14}$ is
$$\Phi_{M_{14}}(x) = x^2- x-10.$$
By Lemma \ref{lem5}, we obtain that $\rho(K(2; 5\times1))=\frac{1+\sqrt{41}}{2}$ is equal to the largest root of $ x^2- x-10= 0$.

$\operatorname{(ii)}$ According
to the partition $V(K(1; \nu-4, 3\times1)) = V(K_1)\cup V(K_{\nu-4})\cup V(3K_1)$, the quotient matrix of $A(K(1; \nu-4, 3\times1))$ equals
$$M_{15} =   \left(
  \begin{array}{ccc}
    0 & 3 & \nu-4 \\
    1 & 0 &  0\\
    1  & 0 &  \nu-5 \\
  \end{array}
     \right).
      $$
Then, the characteristic polynomial of $M_{15}$ is
$$\Phi_{M_{15}}(x) =x^3 -(\nu- 5)x^2-(\nu -1)x+3\nu-15.$$

By Lemma \ref{lem5}, one has that $\rho(K(1; \nu-4, 3\times1))$ is equal to the largest root of $x^3 -(\nu- 5)x^2-(\nu -1)x+3\nu-15 = 0.$
This completes the proof of \ref{theorem15}.

\end{proof}
\quad\quad By Theorems \ref{theorem14} and \ref{theorem15}, we get  the bounds in Theorem \ref{theorem12} are best possible.
According to the proof of Theorem \ref{theorem14}, we have  $K(2; 5\times1)$  and $K(1; \nu-4, 3\times1)$ have no
a $\{K_{1,1}, K_{1,2}, C_{m}: m\geq3\}$-factor. Hence the condition in Theorem \ref{theorem13} can not be  replaced by the
condition that $\mu_1(G)\leq\mu_1(K(2; 5\times1))$ and $\mu_1(G)\leq\mu_1(K(1; \nu-4, 3\times1))$ which implies the bounds established
in Theorem \ref{theorem13} are  the best possible.
\end{enumerate}
\begin{enumerate}

\item  Professor Shuchao Li and his team provided us with help and guidance. We would like to express our sincere gratitude.

\item {\bf Declaration}

{\bf Conflict of interest}
The authors declare that they have no conflicts of interest
to this work.

\item {\bf Data Availability Statement}

 No data was used for the research described in the article.

\item {\bf Acknowledgements}

 The authors would like to thank the anonymous reviewers for their careful reading of our manuscript and their many
insightful comments and suggestions. This work is supported by the National Science Foundation of China (Nos.12261074 and 12461065).

\end{enumerate}

\end{document}